\newtheorem{theorem}{Theorem}[section]
\newtheorem{claim}[theorem]{Claim}
\newtheorem{lemma}[theorem]{Lemma}
\theoremstyle{definition}
\newtheorem{definition}[theorem]{Definition}
\newtheorem{example}[theorem]{Example}
\newtheorem{remark}[theorem]{Remark}
\newtheorem{assumption}[theorem]{Assumption}
\newtheorem{construction}[theorem]{Construction}
\numberwithin{equation}{subsection}
\DeclareMathAlphabet{\mathpgoth}{OT1}{pgoth}{m}{n}
\DeclareMathAlphabet{\mathpzc}{OT1}{pzc}{m}{it}
\newcommand{\ZZ}{\mathbb{Z}}
\newcommand{\be}{\begin{enumerate}}
\newcommand{\ee}{\end{enumerate}}
\newcommand{\op}{\operatorname}
\newcommand{\R}{\mathbb{R}}
\DeclareMathOperator{\ind}{Ind}
\DeclareMathOperator{\crit}{Crit}
\DeclareMathOperator{\Hess}{Hess}
\DeclareMathOperator{\stab}{stab}
\newcommand{\p}{\partial}
\newcommand{\des}{\mathscr{D}}
\newcommand{\asc}{\mathscr{A}}
\newcommand{\g}{\mathfrak{g}}
\pgfplotsset{compat=1.18}
\tikzset{->-/.style={decoration={
			markings,
			mark=at position #1 with {\arrow{>}}},postaction={decorate}}}
\tikzset{-<-/.style={decoration={
			markings,
			mark=at position #1 with {\arrow{<}}},postaction={decorate}}}
\newcommand{\labitem}[2]{%
  \def\@itemlabel{#1}
  \item
  \def\@currentlabel{#1}\label{#2}}
\begin{document}

\title[Equivariant Morse-Bott cohomology through stabilization]{Equivariant Morse-Bott cohomology through stabilization}

\author{Erkao Bao}
\address{School of Mathematics, University of Minnesota, Minneapolis, MN 55455}
\email{bao@umn.edu}
\urladdr{https://erkaobao.github.io/math/}

\author{Robi Huq}
\address{School of Mathematics, University of Minnesota, Minneapolis, MN 55455}

\author{Shengzhen Ning}
\address{School of Mathematics, University of Minnesota, Minneapolis, MN 55455}
\email{ning0040@umn.edu}
\urladdr{https://sites.google.com/view/shengzhenning}

\begin{abstract}
For closed manifolds with compact Lie group actions, we study Austin-Braam's Morse-theoretic construction of Borel equivariant cohomology using the technique of \emph{stabilization}. We show that a $C^1$-small equivariant perturbation produces stable invariant Morse-Bott functions. This allows us to realize the equivariant transversality and orientability assumptions in Austin–Braam’s framework by choosing generic invariant Riemannian metrics.
\end{abstract}
\keywords{Morse theory, equivariant Morse cohomology, Lie group actions, equivariant transversality, stable Morse functions, Morse-Bott-Smale, orientability}

\maketitle

\setcounter{tocdepth}{2}
\section{Introduction}

Equivariant Morse theory has been studied extensively since the pioneering work of Atiyah-Bott \cite{atiyah1984moment} on the Yang-Mills functional over Riemann surfaces.  Equivariant Morse cohomology for general compact Lie group actions was first developed by Austin-Braam \cite{AustinBraam}.  However, their construction relies on several assumptions (see Assumption~\ref{assump:AB}) that do not hold in general. We now explain the main analytical difficulties underlying these assumptions.

When the group $G$ is not finite, $G$-invariant functions may have non-discrete critical sets and one must work with Morse-Bott functions instead of Morse functions. In the Morse-Bott setting, a typical chain model, used by 
\cite{AustinBraam}, associates to each critical submanifold its space of 
differential forms, and defines the boundary operator via pullback and 
integration along the fiber through the endpoint evaluation maps on the moduli 
spaces of gradient flow lines. From this perspective, the analytic foundation of 
the theory relies on establishing suitable transversality properties for relevant moduli spaces.

The natural transversality condition in this context is the 
\emph{Morse-Bott-Smale} condition, abbreviated \emph{MBS} (Definition \ref{def:MBS}). There is also a weaker 
notion, referred to as \emph{weakly Morse-Bott-Smale}, or \emph{weakly MBS} (Definition \ref{def:weakMBS}). The first 
set of difficulties can be summarized as follows:

\begin{enumerate}
    \item[(C1)] \label{c1}
    While the weakly MBS condition can be achieved in the non-equivariant Morse-Bott setting, it generally fails in the equivariant Morse-Bott setting.  Moreover, the weakly MBS condition alone is insufficient to define a Morse-theoretic chain complex.
    
    \item[(C2)]
    The MBS condition cannot, in general, be achieved even in the non-equivariant Morse-Bott case.
\end{enumerate}

A classical example in \cite{Latschev00}, the \emph{torus with legs} (Example~\ref{example:hornedtorus}), exhibits a Morse-Bott function for which the MBS condition fails for all metrics, although the weakly MBS condition can still be arranged.  This example justifies (C2).

Later, we introduce another example of a mapping torus (Example~\ref{example:mapping torus}), which illustrates the obstruction described in (C1).

\medskip

A further complication in Morse-Bott theory is the following:
\begin{enumerate}
    \item[(C3)]
    The moduli spaces of gradient flow lines are not always orientable.
\end{enumerate}
This usually necessitates the use of local coefficient systems or orientation bundle valued differential forms.

Taken together, these issues contribute to the relative underdevelopment of Morse-theoretic equivariant (co)homology for compact Lie group actions.

We now analyze these difficulties in greater detail. The example of torus with legs (Example~\ref{example:hornedtorus}) can be upgraded into the equivariant setting by letting $\ZZ_n$ act on it by rotations. It shows that the obstructions above persist even when the group action itself is well behaved.  In particular, the $\ZZ_n$-action on torus with legs is free, yet the function still fails to admit a Morse-Bott-Smale metric. This observation motivates a refinement of the notion of $G$-invariant Morse-Bott functions.  Rather than merely requiring $G$-invariance, we consider \emph{$G$-Morse-Bott functions} (Definition \ref{definition:G-Morse-Bott}), for which each critical submanifold is required to be a single $G$-orbit.  Restricting attention to $G$-Morse-Bott functions excludes the example of $\mathbb{Z}_n$-equivariant torus with legs, but does not resolve the obstruction in (C1).

Note that the function in the mapping torus example (Example~\ref{example:mapping torus}) is $G$-Morse-Bott.  To address this remaining difficulty, we adapt the \emph{stabilization} approach developed in \cite{mayer1989Ginvariant,bao2024morse}.  Starting from a $G$-Morse-Bott function, we perform a carefully chosen, $C^1$-small equivariant perturbation near the critical submanifolds.  The resulting function, which we call \emph{stable}, retains the essential equivariant features of the original function while exhibiting substantially improved analytical properties.  It is worth emphasizing that, in the example of torus with legs, the Morse-Bott function is stable but not $G$-Morse-Bott. The combination of stability and the $G$-Morse-Bott condition turns out to be precisely what is needed.

\medskip

\noindent
\textbf{Key observation.}
The central theme of this paper is that \emph{stable $G$-Morse-Bott functions} provide the appropriate framework of equivariant Morse theory for compact Lie group actions. By considering such class of functions, we resolve the transversality obstructions by achieving Morse-Bott-Smale condition for generic $G$-invariant metrics, and overcome orientability issues without appealing to local coefficient systems or orientation bundle valued differential forms.  As a consequence, we obtain a well-defined Morse-theoretic equivariant cochain complex that recovers Borel equivariant cohomology.

The stabilization procedure is explicit in certain cases, making concrete computations accessible.  In a subsequent paper, we will exploit this feature further by studying the limiting behavior of the equivariant Morse complex as the size of the perturbation tends to zero, thereby relating the stabilized and unstabilized theories in a precise way.

\subsection{Main results}
Let $f\in C^\infty(M)$ be a smooth function on a closed smooth manifold $M$. Let $p\in M$ be some critical point of $f$. The Hessian of $f$ at the critical point $p$ is a symmetric bilinear form Hess$_f(p):T_pM\times T_pM\rightarrow \mathbb{R}$ defined by \[\text{Hess}_f(p)(v,w)=V(Wf),\]where $V,W$ are any vector fields extending $v,w\in T_pM$. Let \[\text{ker}(\text{Hess}_f(p)):=\{v\in T_pM\,|\,\text{Hess}_f(p)(v,w)=0,\forall w\in T_pM\}.\]
\begin{definition}[Morse-Bott functions]
    A smooth function $f:M\to \mathbb{R}$ is called \emph{Morse-Bott} if its critical set $\crit(f)$ is a disjoint union of connected submanifolds such that for each connected component $S$ of $\crit(f)$ and any $p\in S$, we have $T_pS=\text{ker}(\text{Hess}_f(p))$. We denote by $\ind_f(S)$ (or simply $\ind (S)$ when the function is clear in context) the Morse index of $S$, which is the maximal dimension of a subspace in $T_pM$ where $\text{Hess}_f(p)$ is negative definite, for any $p\in S$.
\end{definition}

Now, suppose there is a smooth action of the compact Lie group \(G\) on \(M\). Let $C^{\infty}(M)^G$ be the space of $G$-invariant smooth functions on $M$. When $G$ is a Lie group, $C^{\infty}(M)^G$ may not contain any Morse function since if $p\in M$ is a critical point for some $f\in C^{\infty}(M)^G$, then all points in the orbit submanifold $G\cdot p$ are also critical points of $f$.  As a result, to study the $G$-equivariant Morse theory on $M$, it is natural to allow for critical submanifolds.

\begin{definition}[$G$-Morse-Bott functions]\label{definition:G-Morse-Bott}
A $G$-invariant function $f\in C^{\infty}(M)^G$ is called \emph{$G$-Morse-Bott} if 
\begin{itemize}
    \item $f$ is Morse-Bott;
    \item each connected component $S$ of the critical submanifolds of $f$ satisfies $T_pS=T_p(G\cdot p)$ for any $p\in S$.
\end{itemize}
\end{definition}

This notion was simply called \emph{Morse} in \cite{wasserman1969equivariant} or \emph{$G$-Morse} in \cite{baizhang20}. When $G$ is a finite group, it also coincides with the notion of equivariant Morse function introduced in \cite[Definition 1.1]{bao2024morse}. If $G$ acts trivially on $M$, then a $G$-Morse-Bott function reduces to an ordinary (non-equivariant) 
Morse function. It is important to distinguish between a $G$-invariant Morse-Bott function and a 
$G$-Morse-Bott function. For instance, constant functions are always 
$G$-invariant Morse-Bott, but they are not $G$-Morse-Bott in the sense of 
Definition\ref{definition:G-Morse-Bott} unless $G$ acts transitively on $M$. Moreover, the height function on a torus with legs in Example \ref{example:hornedtorus} can be viewed as a $\mathbb{Z}_n$-invariant Morse-Bott function, but not a $\mathbb{Z}_n$-Morse-Bott function. 

A fundamental fact, proved in \cite[Lemma 4.8]{wasserman1969equivariant}, asserts the existence result for $G$-Morse-Bott functions.

\begin{theorem}[\cite{wasserman1969equivariant}]\label{thm:wasserman}
 $G$-Morse-Bott functions are dense in $C^{\infty}(M)^G$.
\end{theorem}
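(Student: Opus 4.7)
The proof proceeds by stratum-by-stratum equivariant perturbation, combining the orbit-type stratification of $M$ with the slice theorem and Schwarz's theorem on smooth invariants. Given $f_0\in C^\infty(M)^G$ and any $C^\infty$-neighborhood of it, the aim is to produce a $G$-Morse-Bott $f$ in that neighborhood. First I would partition $M$ into orbit-type strata $M_{(H)}=\{p:\op{Stab}_G(p)\text{ conjugate to }H\}$; since $M$ is compact, only finitely many orbit types occur, and the subconjugacy partial order gives a finite filtration whose deepest strata are closed. Order the strata compatibly with this filtration and induct from the deepest stratum (largest stabilizer) outward.

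For the local analysis near an orbit $G\cdot p\subset M_{(H)}$ with $H=\op{Stab}_G(p)$, the slice theorem gives a $G$-equivariant tubular neighborhood modeled on $G\times_H V$, where $V$ is the slice $H$-representation at $p$. Under this identification, $G$-invariant functions correspond to $H$-invariant functions on $V$, and Schwarz's theorem identifies $C^\infty(V)^H$ with $\pi^*C^\infty(\RR^n)$ for a Hilbert generating set $\pi=(p_1,\ldots,p_n)\co V\to\RR^n$ of $H$-invariant polynomials. Thus $f_0$ takes the form $\phi\circ\pi$ near $G\cdot p$, and $C^\infty$-small perturbations of $\phi$ induce $C^\infty$-small $G$-invariant perturbations of $f_0$. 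The $G$-Morse-Bott condition at $G\cdot p$ translates to $0$ being an isolated, non-degenerate critical point of $\phi\circ\pi|_V$, which amounts to an open dense non-degeneracy condition on finitely many jets of $\phi$ and is achievable by standard Morse theory on $\RR^n$.

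The global argument patches these local perturbations via a $G$-invariant partition of unity subordinate to a finite cover of each stratum by slice neighborhoods. At the inductive step, one perturbs $f$ on the next stratum using $G$-equivariant bumps supported in tubular neighborhoods of the candidate critical orbits; openness of the $G$-Morse-Bott condition on compact critical loci under $C^2$-small $G$-invariant perturbations preserves what has already been stabilized on deeper strata. Compactness of $\crit(f)$ at each stage ensures that only finitely many local perturbations are needed, so a finite sequence of such steps yields the desired $f$.

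\textbf{Main obstacle.} The crux is controlling the interaction between strata: a perturbation designed to achieve non-degeneracy on one stratum can a priori create new critical orbits on an adjacent stratum of smaller isotropy, or fuse an existing critical orbit into a larger critical set on that same stratum. This is addressed by simultaneously shrinking the support and the $C^\infty$-size of the bumps at each inductive step, combined with a careful use of openness of the non-degenerate Morse-Bott condition at a fixed critical orbit. A secondary technicality is that the slice representation $V$ varies with the orbit, so one needs both the local product structure provided by the slice theorem and a $G$-invariant Riemannian metric to glue local perturbations coherently; this is standard once a $G$-invariant metric is fixed at the outset.
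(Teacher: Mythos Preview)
The paper does not prove this theorem: it is quoted from Wasserman \cite[Lemma~4.8]{wasserman1969equivariant} and used as a black box, so there is no in-paper argument to compare your proposal against.

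On its own merits your outline is a reasonable route, broadly in the spirit of Wasserman's original proof via the slice theorem. One imprecision worth flagging: the reduction through Schwarz's theorem to ``standard Morse theory on $\RR^n$'' obscures what is actually needed. The Hilbert map $\pi$ has rank only $\dim V^H$ at $0$, so the Hessian of $\phi\circ\pi$ at $0$ is not controlled by Morse non-degeneracy of $\phi$; rather it mixes first derivatives of $\phi$ in the degree-$2$ invariant directions with second derivatives in the degree-$1$ (i.e.\ $V^H$) directions. Your open-dense claim on jets of $\phi$ is still correct, but the cleaner and more direct argument bypasses Schwarz entirely: $H$-invariant non-degenerate quadratic forms on $V$ are open and dense among all $H$-invariant quadratic forms, so a small invariant quadratic perturbation at each slice already forces the $G$-Morse-Bott condition at that orbit. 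Your stratum-by-stratum induction and the cross-stratum ``main obstacle'' are likewise more elaborate than the problem requires; since the $G$-Morse-Bott condition at a given critical orbit is $C^2$-open and each slice perturbation can be taken with arbitrarily small support and size, a single pass over finitely many slice neighborhoods suffices.
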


Now, we consider the critical submanifolds of some $G$-invariant Morse-Bott function $f$. An orbit $G\cdot p$ consisting of critical points will be called a \emph{critical orbit}, which is diffeomorphic to the Lie group quotient $G/H$ where $H$ is the stabilizer group at $p$. Since Hess$_f(p)(v,\textrm{-})=0$ for any $v\in T_p(G\cdot p)$, by abuse of notation, Hess$_f(p)$ will also be used to denote the symmetric bilinear form on the normal space \[N_p:=T_pM/T_p(G\cdot p).\]  Thus, when $f$ is further assumed to be $G$-Morse-Bott, Hess$_f(p)$ will be non-degenerate on $N_p$. There is also a natural slice representation of $H$ on $N_p$. It admits a canonical decomposition \[N_p=N_p'\oplus N_p''\] into the trivial part $N_p'$ consisting vectors fixed by $H$, and the non-trivial part $N_p''$ consisting vectors $v\in N_p$ such that \[\int_H h\cdot v \,d\mu_h=0\]for the Haar measure $\mu$ on $H$. The slice theorem states that there is a $G$-invariant neighborhood $\mathcal{N}_p$ of $G\cdot p$ which is $G$-equivariantly diffeomorphic to $G\times_{H}N_p$ that takes $p$ to $[e,0]$. Accordingly, we also have a canonical splitting of this normal bundle \[\mathcal{N}_p=\mathcal{N}_p'\oplus \mathcal{N}_p''\] where $\mathcal{N}_p'$ is identified with the trivial bundle $G/H\times N_p'$ and $\mathcal{N}_p''$ is identified with the possibly non-trivial bundle $G\times _H N_p''$. We formulate a notion of stability for $G$-invariant Morse-Bott functions that directly generalizes the stability introduced in \cite{bao2024morse} for finite group actions.

\begin{definition}[Stable]\label{definition:stable G-Morse-Bott}
  Let $f$ be a $G$-invariant Morse-Bott function. A connected component $S$ of $\crit(f)$ is called \emph{stable} if the Hessian bilinear form Hess$_f(p)$ is positive definite on $N_p''$ for some $p\in S$. $f$ is said to be \emph{stable} if all components of $\crit(f)$ are stable. 
\end{definition}

\begin{remark}\label{rmk:stablewelldf}
    It is easy to check that if Hess$_f(p)$ is positive definite on $N_p''$, then Hess$_f(q)$ is also positive definite on $N_q''$ for any $q\in G\cdot p$. This guarantees the above is well-defined.
\end{remark}

We refer the reader to Figure \ref{fig:notions} for an illustration of the inclusion relations among the notions introduced above.
Analogous to Theorem \ref{thm:wasserman}, the study of equivariant Morse theory of stable $G$-Morse-Bott functions requires the existence of them, which is provided by the following result.

\begin{theorem}\label{thm:stabilization}
    Given any $G$-invariant Morse-Bott function $f$, one can perturb it into a stable $G$-Morse-Bott function $f'$ that is arbitrarily $C^1$-close to $f$.
\end{theorem}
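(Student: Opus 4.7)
My plan is to carry out the perturbation in two consecutive stages. Stage~1 splits every critical submanifold of $f$ into a disjoint union of single $G$-orbits (the $G$-Morse-Bott condition); Stage~2 then shifts the Hessian on each non-trivial normal part $N_p''$ to be positive definite (stability). Both stages operate locally inside $G$-invariant tubular neighborhoods, and by choosing the cutoff radii progressively small I can keep the cumulative $C^1$-norm of all perturbations within any prescribed bound.

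For Stage~1, let $S_1,\dots,S_k$ be the connected components of $\crit(f)$. Viewing each $S_j$ as a closed $G$-manifold in its own right, Theorem~\ref{thm:wasserman} supplies a $G$-Morse-Bott function $g_j\in C^\infty(S_j)^G$. Using the equivariant tubular neighborhood theorem I pick a $G$-equivariant projection $\pi_j\colon \mathcal U_j\to S_j$ and a $G$-invariant cutoff $\rho_j$ supported in $\mathcal U_j$, and set
\[
  f_1 \;=\; f + \epsilon_1\sum_j \rho_j\,(g_j\circ\pi_j).
\]
The equivariant Morse-Bott lemma writes $f$ locally as $c_j + Q_j(n)$, where $Q_j$ is a nondegenerate $H$-invariant quadratic form in the normal coordinate $n$. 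Since $g_j\circ\pi_j$ depends only on the tangential coordinate, a direct computation of $df_1$ shows that for sufficiently small $\epsilon_1$ the critical set of $f_1$ inside $\mathcal U_j$ is exactly $\crit(g_j)\subset S_j$, which by hypothesis is a union of single $G$-orbits; the Hessian in the decomposition $N_p = S_p\oplus \hat N_p$ splits as $\epsilon_1\,\text{Hess}(g_j)|_{S_p}\oplus\text{Hess}(f)|_{\hat N_p}$, both nondegenerate. Hence $f_1$ is $G$-Morse-Bott and $C^1$-close to $f$.

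For Stage~2, near each critical orbit $G\cdot p$ of $f_1$ with stabilizer $H$ the slice theorem identifies a $G$-invariant neighborhood with $G\times_H N_p$. The canonical decomposition $N_p = N_p'\oplus N_p''$ extends $G$-equivariantly as a bundle over $G\cdot p$, so using an $H$-invariant inner product I obtain a well-defined $G$-invariant function
\[
  \phi_p\bigl(g\cdot [e,v]\bigr) \;=\; \rho\!\left(\|v\|^2/r^2\right)\|v''\|^2.
\]
Since $\phi_p \sim \|v''\|^2$ near the origin, one computes $\text{Hess}_{f_1+C\phi_p}(p)|_{N_p''} = \text{Hess}_{f_1}(p)|_{N_p''} + 2C\cdot I$, which is positive definite once $C$ exceeds the absolute value of the most negative eigenvalue. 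Meanwhile $\|C\phi_p\|_{C^1} = O(Cr)$, so shrinking $r$ makes the perturbation $C^1$-small; as the tubes may be taken pairwise disjoint, stabilizing one orbit does not affect any other.

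The main obstacle is that $C\phi_p$ generically introduces \emph{spurious} critical loci of the form $\{v'=0,\ \|v''\|=t^*\}$, where $t^*$ is a radius at which the radial derivative of $\phi_p$ cancels a negative eigenvalue of $\text{Hess}_{f_1}|_{N_p''}$. Such a locus is $G$-invariant but need not be a single orbit and may fail the stability condition. My plan to overcome this is by induction on the conjugacy class of the stabilizer: every point of a spurious locus has $v''\neq 0$, so its stabilizer is a \emph{proper} closed subgroup of $H$. Alternating a Stage~1 correction (to split the spurious locus into single orbits) with a Stage~2 stabilization at a strictly smaller cutoff scale therefore reduces the maximal stabilizer conjugacy class occurring among critical orbits at each round. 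The descending chain condition for closed subgroups of the compact Lie group $G$ guarantees termination in finitely many rounds; at the final round all surviving critical orbits are principal, so $N''=0$ and stability is automatic. Taking the successive cutoff radii to decay geometrically keeps the telescoping sum of $C^1$-norms below any prescribed threshold, yielding the desired $C^1$-closeness.
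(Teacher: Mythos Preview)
Your two-stage strategy—first achieve the $G$-Morse-Bott condition (Stage~1), then force the Hessian on $N_p''$ to be positive definite (Stage~2)—is natural, and the termination argument via the descending chain on isotropy types is sound in outline. However, the treatment of the spurious critical loci created in Stage~2 has a genuine gap.

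First, your description of the spurious locus as $\{v'=0,\ \|v''\|=t^*\}$ is not correct for the cutoff you chose. Writing $f_1$ locally as $Q'(v')+Q''(v'')$ and differentiating $\phi_p=\rho(\|v\|^2/r^2)\|v''\|^2$, the $v'$-component of the critical-point equation reads
\[
\Bigl(A'+\tfrac{C\rho'\,\|v''\|^2}{r^2}\,I\Bigr)v'=0,
\]
which admits solutions with $v'\neq0$ whenever that scalar matches an eigenvalue of $-A'$. What \emph{is} true—and is all you need for the stabilizer to drop—is that every spurious critical point has $v''\neq0$. Second, and more seriously, you never verify that the perturbed function is Morse-Bott along the spurious locus; without this, your ``Stage~1 correction'' (which presupposes a Morse-Bott critical set) does not apply. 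Invoking Theorem~\ref{thm:wasserman} instead is not automatic either: a Wasserman perturbation is global, so you would have to argue separately that it can be taken $C^2$-small enough to preserve the stability already achieved at $G\cdot p$, and that the resulting new critical orbits near the spurious locus still have isotropy strictly smaller than $H$. These points are fixable, but they are not addressed.

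For comparison, the paper's proof follows a different inductive scheme: induction on $\dim M$ rather than on isotropy type. Working in an $H$-slice $\mathbb{D}_p$, the construction in Lemma~\ref{lem:induction} replaces $-\|\vec{x}_-\|^2$ by $\Phi_\lambda(\|\vec{x}_-\|)$, which turns the origin into a local minimum (hence automatically stable, since then $N^-=0$) and creates exactly one new critical sphere $S\subset U_p^-$; this sphere is resolved by a stable $H$-Morse-Bott function $h$ on $S$, supplied by the inductive hypothesis in dimension $\dim S\le k-1$. The new critical set is then precisely $\{0\}\cup\crit(h)$, with stability inherited directly from $h$, so no uncontrolled spurious loci ever arise and no secondary clean-up round is required. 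The paper's route is thus more explicit and sidesteps the issue you encountered; your approach, if the gap above were closed, would give an alternative organization of the induction.
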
 

  In \cite{mayer1989Ginvariant}, Mayer introduced the so-called \emph{spezielle invariante Morse-funktion}, requiring the Morse index along any critical orbit remain  invariant under restriction to its orbit-type stratum and obtained an existence result. It is straightforward to see that the notion of spezielle invariante Morse-funktion is equivalent to our notion of \emph{stable $G$-Morse-Bott function}. In Section~\ref{section:stabilization}, we will give a proof of Theorem \ref{thm:stabilization} using the induction strategy developed in \cite{bao2024morse} for finite group actions. This recovers the existence result shown in \cite{mayer1989Ginvariant} while providing a more explicit construction. See also Remark \ref{remark:comparisonMayer}.

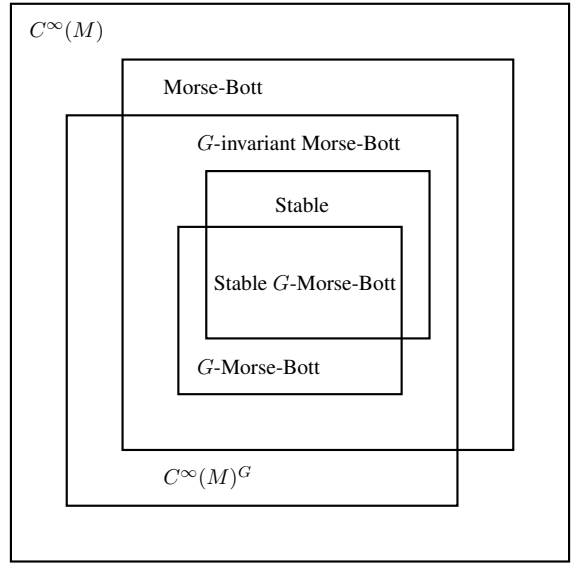
\begin{figure}
    \centering
      \resizebox{0.6\linewidth}{!}{
    \begin{tikzpicture}[x=1cm,y=1cm]


\tikzset{
  box/.style={
    draw=black!70!black,
    line width=1.1pt
  }
}

\draw[box] (0,0) rectangle (10,10);

\draw[box] (1,1) rectangle (8,8);

\draw[box] (2,2) rectangle (9,9);

\draw[box] (3,3) rectangle (7,6);

\draw[box] (3.5,4) rectangle (7.5,7);

\node[anchor=west] at (0.2,9.5) {$C^{\infty}(M)$};
\node[anchor=west] at (2.6,8.5)  {Morse-Bott};
\node[anchor=west] at (4.6,6.4)  {Stable};
\node[anchor=west] at (2.6,1.5)  {$C^{\infty}(M)^G$};
\node[anchor=west] at (3.2,7.5)  {$G$-invariant Morse-Bott};
\node[anchor=west] at (3.2,3.5)  {$G$-Morse-Bott};
\node[anchor=west] at (3.5,5)  {Stable $G$-Morse-Bott};

\end{tikzpicture}
}
    \caption{Various notions discussed above.}
    \label{fig:notions}
\end{figure}

Our next goal is to find a nice $G$-invariant Riemannian metric for a given stable $G$-Morse-Bott function. Given a pair $(f,g)$ of a Morse-Bott function $f$ and a Riemannian metric $g$, let $\varphi_t$ be its negative gradient flow. For any critical point $p$ or critical submanifold $S$ of $f$, we denote by
\begin{align*}
    \des_{f,g}(p)=\{x\in M\,|\,\lim_{t\rightarrow-\infty}\varphi_t(x)=p\},\asc_{f,g}(p)=\{x\in M\,|\,\lim_{t\rightarrow\infty}\varphi_t(x)=p\}\\
     \des_{f,g}(S)=\{x\in M\,|\,\lim_{t\rightarrow-\infty}\varphi_t(x)\in S\}, \asc_{f,g}(S)=\{x\in M\,|\,\lim_{t\rightarrow\infty}\varphi_t(x)\in S\}
\end{align*}
the descending/ascending manifolds at either $p$ or $S$. When the pair $(f,g)$, or the function $f$, is clear from the context, we will simply write $\asc$ and $\des$, or $\asc_g$ and $\des_g$, in place of $\asc_{f,g}$ and $\des_{f,g}$, respectively.

\begin{definition}[Morse-Bott-Smale]\label{def:MBS}
    The pair $(f,g)$ of a Morse-Bott function $f$ and a Riemannian metric $g$ is called \emph{Morse-Bott-Smale} (or  \emph{MBS} for short) if for any two components $S_1,S_2$ of $\crit(f)$, the descending manifold $\des(p)$ of any point $p\in S_1$ intersects the ascending manifold $\asc(S_2)$ of $S_2$ transversely.
\end{definition}

\begin{theorem}\label{thm:Morse-Bott-Smale-is-generic}
    Let $k \geq 1$. Let $f$ be a stable $G$-Morse-Bott function on $M$ of class $C^{k+1}$. 
    For a generic $G$-invariant Riemannian metric $g$ on $M$ of class $C^k$, the pair $(f,g)$ is Morse-Bott-Smale.
\end{theorem}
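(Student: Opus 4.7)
The plan is to run a parametric Sard--Smale argument on a universal moduli space of gradient flow lines fibered over the Banach manifold $\mc{G}^k$ of $C^k$ $G$-invariant Riemannian metrics, after first using the stability assumption to reduce the problem to a fixed-point submanifold. Fix critical orbits $S_1, S_2$, a point $p\in S_1$ with stabilizer $H := H_p$, and suppose $\gamma$ is a $g$-gradient flow line from $p$ converging to some $q_\infty\in S_2$. Stability at $S_2$ says $\Hess f(q_\infty)$ is positive definite on $N_{q_\infty}''$, hence $E^-_{q_\infty}\subseteq N_{q_\infty}'$ is pointwise fixed by $H_{q_\infty}$, and so $\des(q_\infty)\subseteq M^{H_{q_\infty}}$. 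Since $\gamma(t)\to q_\infty$ and the gradient flow is $G$-equivariant, the whole of $\gamma$ lies in $M^{H_{q_\infty}}$, so $p\in M^{H_{q_\infty}}$ and $H_{q_\infty}\subseteq H$. Combined with the chain $H\subseteq H_q\subseteq H_{q_\infty}$ that follows from $G$-equivariance of the flow, this yields $H_q=H$ for every $q\in\gamma$, and in particular $\gamma\subseteq M^H$.

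A second application of stability decomposes the \emph{MBS} condition along $\gamma$. From $N_{q_\infty}''\subseteq E^+_{q_\infty}$ one obtains $T_{q_\infty}\asc(S_2)\supseteq (T_{q_\infty} M)^{H,\perp}$, and $H$-equivariance of the linearized flow propagates this to $T_q\asc(S_2)\supseteq (T_q M)^{H,\perp}$ at every interior $q\in\gamma$. Since $T_q\des(p)\subseteq (T_q M)^H$, the transversality $T_q\des(p)+T_q\asc(S_2)=T_q M$ is equivalent to the transversality of $\des(p)$ with $\asc(S_2)\cap M^H$ inside $T_q M^H$. Now set up
\[
\mc{M}^k = \{(g, p, u) : g\in\mc{G}^k,\ p\in S_1,\ u \text{ a } g\text{-gradient flow from } p\text{ converging into } S_2\}
\]
in a suitable weighted Sobolev completion, and verify it is a $C^k$ Banach manifold by checking surjectivity of the universal linearization. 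An element $\xi$ in the cokernel of the fixed-metric linearization $L_0 = D_t + \nabla\nabla f|_u$ solves the adjoint equation $L_0^*\xi=0$; by $H$-equivariance $\xi$ decomposes into isotypes, and the analysis just performed identifies $\coker L_0$ (pointwise along $\gamma$) with a quotient whose non-$H$-invariant isotypes vanish, so $\xi$ may be taken $H$-invariant. At any interior $q\in\gamma$ the linear map $h\mapsto h^\sharp(df)|_q$ from $H$-invariant symmetric $2$-tensors at $q$ into $(T_q M)^H$ is surjective (decompose $T_q M$ into $H$-isotypes; both $df|_q$ and the target lie in the trivial isotype, so the standard non-equivariant construction of $h$ applies there), and any such $H$-invariant tensor extends to a $G$-invariant tensor field near $q$ via the slice theorem. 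Thus a non-zero $\xi$ must pair non-trivially with $\dot g^\sharp(df)|_\gamma$ for some $G$-invariant $\dot g$, forcing $\xi=0$.

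Applying Sard--Smale to the Fredholm projection $\mc{M}^k\to\mc{G}^k$ and intersecting the resulting residual sets over the finitely many pairs of critical orbits yields a residual, hence dense, set of MBS metrics in $\mc{G}^k$. The main technical obstacle is the surjectivity step: verifying that the constrained class of $G$-invariant metric perturbations is rich enough to kill the cokernel of the universal linearization. This is precisely where both consequences of stability enter — keeping $\gamma\subseteq M^H$ so that $G$-invariant perturbations at points of $\gamma$ reduce to merely $H$-invariant ones (rather than $H_q$-invariant for some potentially larger stabilizer), and ensuring that the non-$H$-invariant part of the cokernel already vanishes so that the remaining $H$-invariant perturbations suffice.
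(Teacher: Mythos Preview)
Your first paragraph contains a genuine error. You invoke stability at $S_2$ to conclude $\des(q_\infty)\subseteq M^{H_{q_\infty}}$, which is fine, but $\gamma$ lies in $\asc(q_\infty)$, not $\des(q_\infty)$, so you cannot deduce $\gamma\subseteq M^{H_{q_\infty}}$ from this. In fact the conclusion $H_{q_\infty}\subseteq H$ is false in general: in the stabilized sphere (Example~\ref{example:sphere}) a flow line runs from $S_N'$ (trivial stabilizer) to $S_S$ (full $S^1$ stabilizer), so $H=\{e\}\subsetneq S^1=H_{q_\infty}$. Likewise, the chain $H\subseteq H_q$ does not follow from $G$-equivariance alone. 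The correct argument swaps the roles of the two endpoints: apply stability at $S_1$ to get $N_p^-\subseteq N_p'$ fixed by $H$, hence $\des(p)\subseteq M^H$ (this is the paper's Claim~\ref{claim:invariant}), so $\gamma\subseteq M^H$ and $H\subseteq H_q$; combining with $H_q\subseteq H_p=H$ from equivariance of the flow yields $H_q=H$. The correct inclusion at the far end is $H\subseteq H_{q_\infty}$, which is what you actually need in your second paragraph anyway (it gives $(T_{q_\infty}M)^{H,\perp}\subseteq (T_{q_\infty}M)^{H_{q_\infty},\perp}\subseteq T_{q_\infty}(G\cdot q_\infty)\oplus N''_{q_\infty}\subseteq T_{q_\infty}\asc(S_2)$).

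With this repair the rest of your outline is sound, and it takes a genuinely different route from the paper. The paper first reduces MBS to weakly MBS via Lemma~\ref{lem:equivMBS}, then passes to the quotient $X_{(H)}=M_{(H)}/G$ where the $G$-action disappears, runs an ordinary non-equivariant Sard--Smale argument there, and finally lifts the perturbed metric back to $M$ and upgrades transversality from $M_{(H)}$ to $M$ using stability at $q_\infty$ (Claim~\ref{claim:extendtransversality}). You instead stay on $M$ throughout, decompose the cokernel of the linearization into $H$-isotypes, use stability at $q_\infty$ to kill the non-trivial isotypes automatically (this is the analytic counterpart of the paper's Claim~\ref{claim:extendtransversality}), and then observe that since $H_q=H$ along $\gamma$, $G$-invariant metric perturbations restrict at $q$ to arbitrary $H$-invariant tensors and suffice to kill the trivial isotype. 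Your approach is more direct and avoids the quotient construction; the paper's approach has the advantage of reducing cleanly to classical Morse theory with no equivariance left. One small point: your universal moduli space lets $p$ vary over $S_1$, which naturally yields weakly MBS; either fix one $p$ per orbit and use $G$-equivariance to propagate MBS across $S_1$, or invoke Lemma~\ref{lem:equivMBS} afterward.
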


Austin-Braam \cite{AustinBraam} constructed an equivariant Morse cochain complex by using Cartan's model of $G$-invariant symmetric powers of the dual Lie algebra valued differential forms on critical submanifolds (see also \cite{zhuang2025analytictopologicalrealizationsinvariant} for the non-equivariant cochain complex). However, their construction requires Assumptions \ref{assump:AB} which do not hold in general. Therefore, Theorem~\ref{thm:Morse-Bott-Smale-is-generic} combined with Lemma~\ref{lem:orientable} shows that these assumptions can be satisfied by restricting to stable $G$-Morse-Bott functions.

The paper is organized as follows.
In Section \ref{section:stabilization}, we explain how to stabilize a $G$-Morse-Bott function. Next, we show the genericity of MBS metrics for any stable $G$-Morse-Bott function in Section \ref{section:genericity}. Then, Section \ref{section:cohomology} reviews the Morse-theoretic construction of ordinary cohomology and equivariant cohomology using $G$-invariant differential forms on critical submanifolds, following \cite{AustinBraam,zhuang2025analytictopologicalrealizationsinvariant}. Finally, in 
Section~\ref{section:example}, we present examples illustrating computations via 
the stabilization procedure.

\medskip

\textbf{Acknowledgments}
We would like to thank Tyler Lawson, 
Conan Leung, Ko Honda, and Hao Zhuang for helpful communications. This project is partially supported by the NSF grant DMS-2404529.

\section{Stabilization}\label{section:stabilization}
In this section, we adapt the proof in \cite{bao2024morse} to prove Theorem \ref{thm:stabilization}. Let $G$ be a compact Lie group acting on a closed manifold $M$. Recall that our original Definition \ref{definition:stable G-Morse-Bott} for stable $G$-invariant Morse-Bott functions is metric-independent. If, in addition, $M$ is equipped with a $G$-invariant Riemannian metric $g$, then 
for any critical orbit $G\cdot p$ the normal space $N_p$ can be canonically 
identified with the orthogonal complement $T_p(G\cdot p)^{\perp}$ with respect to 
$g$. We denote by $g_p$ the induced inner product on $N_p$. With this 
identification, the Hessian $\Hess_f(p)$ may be viewed as a self-adjoint operator 
$A_p$ on $N_p$, defined by
\[
\Hess_f(p)(v,w) = g_p(A_p v, w).
\]
In this case, $N_p$ admits a non-canonical orthogonal decomposition \[
N_{p} = N^{+}_{p} \oplus N^{-}_{p}\oplus N_p^0,
\]
where the subspaces $N^{+}_{p}$, $N^{-}_{p}$ and $N^0_p$, depending on $g_p$, denote the positive, negative and zero eigenspaces of $A_p$, respectively. Note that when $f$ is $G$-Morse-Bott, the zero eigenspace $N_p^0$ is trivial. Under the presence of $G$-invariant metric, the following lemma provides the alternative characterization of stability.

\begin{lemma}\label{lem:stabledefinition}
    Let $f$ be a $G$-invariant Morse-Bott function with some critical orbit $G\cdot p$. Then the following are equivalent.
    \begin{enumerate}
        \item $f$ is stable at $G\cdot p$.
        \item For any $G$-invariant metric $g$, $N^{-}_{p}\oplus N_p^0 \subseteq N'_{p}$. That is, $N^{-}_{p}\oplus N_p^0$ is fixed by $\stab(p)$.
        \item There exists a $G$-invariant metric $g$ such that $N^{-}_{p}\oplus N_p^0 \subseteq N'_{p}$.
        
    \end{enumerate} 
\end{lemma}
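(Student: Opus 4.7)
The plan is to establish the equivalence via the cyclic chain $(1)\Rightarrow(2)\Rightarrow(3)\Rightarrow(1)$, since $(2)\Rightarrow(3)$ is immediate from the existence of $G$-invariant Riemannian metrics (obtained by Haar-averaging any Riemannian metric on $M$). So the real content is $(1)\Rightarrow(2)$ and $(3)\Rightarrow(1)$, which share a common linear-algebraic input.

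The key observation is that for any $G$-invariant metric $g$, the self-adjoint operator $A_p$ on $N_p$ representing $\Hess_f(p)$ commutes with the slice representation of $H=\stab(p)$. Indeed, since $g$ is $G$-invariant the inner product $g_p$ is $H$-invariant, and since $f$ is $G$-invariant the bilinear form $\Hess_f(p)$ is $H$-invariant; combining these yields $H$-equivariance of $A_p$. As a consequence, the canonical decomposition $N_p = N_p'\oplus N_p''$ is $A_p$-invariant, and hence so is each of the further decompositions when intersected with $N_p'$ and $N_p''$.

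For $(1)\Rightarrow(2)$, fix an arbitrary $G$-invariant metric $g$ and assume $\Hess_f(p)$ is positive definite on $N_p''$. Then $A_p|_{N_p''}$ has only strictly positive eigenvalues, so $N_p^{-}\oplus N_p^0$, which is the non-positive eigenspace of $A_p$, must lie in the $A_p$-invariant complement $N_p'$. For $(3)\Rightarrow(1)$, suppose $g$ is a $G$-invariant metric with $N_p^{-}\oplus N_p^0\subseteq N_p'$. Using $A_p$-invariance of $N_p''$, we decompose
\[
N_p''=(N_p''\cap N_p^+)\oplus (N_p''\cap N_p^-)\oplus (N_p''\cap N_p^0).
\]
The hypothesis together with $N_p'\cap N_p''=0$ forces $N_p''\cap N_p^-=N_p''\cap N_p^0=0$, so $N_p''\subseteq N_p^+$, and positive-definiteness of $\Hess_f(p)$ on $N_p''$ follows.

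The only step that needs genuine care is the verification that $A_p$ commutes with the slice representation, which in turn requires the decomposition $N_p=N_p'\oplus N_p''$ to be $H$-invariant (clear from its definition via $H$-averaging). Beyond that, everything reduces to elementary linear algebra on a finite-dimensional $H$-representation, and there is no substantive analytic obstacle.
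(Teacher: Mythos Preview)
Your proposal is correct and follows essentially the same approach as the paper: both establish the cyclic chain $(1)\Rightarrow(2)\Rightarrow(3)\Rightarrow(1)$ by first showing that $A_p$ is $H$-equivariant and hence preserves the decomposition $N_p=N_p'\oplus N_p''$, then reducing everything to elementary linear algebra. The only cosmetic difference is that the paper deduces $A_p$-invariance of $N_p''$ via the $g_p$-orthogonality of $N_p'$ and $N_p''$ together with self-adjointness of $A_p$, whereas you obtain it directly from $H$-equivariance and the Haar-averaging description of $N_p''$; both routes are equally valid.
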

\begin{proof}
   \text{(1)$\Rightarrow$(2):} Assume $f$ is stable at $G\cdot p$ and take any $G$-invariant metric $g$. Let $H\subseteq G$ be the stabilizer group at $p$. By the $G$-invariance of $f$,\[\Hess_f(p)(v,w)=\Hess_f(p)(h\cdot v,h\cdot w)\]
    for any $h\in H$ and $v,w\in N_p$. By the definition of $A_p$,  \[g_p(A_p(h\cdot v),h\cdot w)=g_p(A_pv,w)=g_p(h\cdot A_pv,h\cdot w),\]which implies that $A_p(h\cdot v)=h\cdot A_pv$ for any $h\in H$ and $v\in N_p$. We see that $A_p(N_p')\subseteq N_p'$. Note that the canonical decomposition $N_p'$ and $N_p''$ are $g_p$-orthogonal since for any $v\in N_p',w\in N_p''$, \[g_p(v,w)=\int_Hg_p(h\cdot v,h\cdot w)d\mu_h=\int_Hg_p(v,h\cdot w)d\mu_h=g_p(v,\int_H h\cdot wd\mu_h)=0.\] As a result, we also have $A_p(N_p'')\subseteq N_p''$ since $A_p$ is self-adjoint. Let $x\in N_p^-\oplus N_p^0$ be an eigenvector with non-positive eigenvalue $\lambda$. Suppose $x=x'+x''$ in the decomposition $N_p'\oplus N_p''$. Since $A_p$ preserves $N_p'$ and $N_p''$, it follows that $A_p(x'')=\lambda x''$. By the stability assumption, $x''$ must be $0$. Therefore, we have proved $N^{-}_{p}\oplus N_p^0 \subseteq N'_{p}$.
    
    \medskip
    
    \text{(2)$\Rightarrow$(3):} By the existence of $G$-invariant metric.
    \medskip
    
    \text{(3)$\Rightarrow$(1):} Assume under some $G$-invariant metric $g$, the inclusion $N^{-}_{p}\oplus N_p^0 \subseteq N'_{p}$ holds true. We have seen in the step (1)$\Rightarrow$(2) that $N_p'$ and $N_p''$ are orthogonal. It follows that \[N_p^+=(N^{-}_{p}\oplus N_p^0 )^{\perp}\supseteq (N_p')^{\perp}=N_p'',\] which implies $N_p''$ is positive definite. Therefore, $f$ is stable at $G\cdot p$.
\end{proof}

From now on, we fix a $G$-invariant Riemannian metric $g$. Let $p\in M$ and $H\subseteq G$ be its stabilizer group. By slice theorem, there exists a $G$-invariant neighborhood $U(G\cdot p)$ of the orbit $G\cdot p$ which is $G$-equivariant diffeomorphic to $G\times_H \mathbb{D}_p$, where $\mathbb{D}_p\subseteq M$ is an $H$-invariant embedded ball of dimension $\dim M-(\dim G-\dim H)$ centered at $p$ constructed from the exponential map of some open neighborhood of the origin in the normal space $N_p=T_pM/T_p(G\cdot p)$. $\mathbb{D}_p$ will be called a local slice at $p$. There is the following basic observation.

\begin{lemma}\label{lem:correspondence}
   There exists local slice $\mathbb{D}_p$ such that the set of $G$-Morse-Bott functions on $U(G\cdot p)\cong G\times_H\mathbb{D}_p$ is in one-to-one correspondence with the set of $H$-Morse-Bott functions on $\mathbb{D}_p$.
\end{lemma}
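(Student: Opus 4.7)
The plan is to set up a mutually inverse pair of restriction and extension maps between $G$-invariant functions on $U(G\cdot p)\cong G\times_H\mathbb{D}_p$ and $H$-invariant functions on $\mathbb{D}_p$, and then check that the $G$-Morse-Bott condition upstairs corresponds precisely to the $H$-Morse-Bott condition downstairs.

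\textbf{Bijection on invariant functions.} Fix any local slice $\mathbb{D}_p$ provided by the slice theorem, so that the natural map $G\times_H\mathbb{D}_p\to U(G\cdot p)$ is a $G$-equivariant diffeomorphism. The restriction $F\mapsto f:=F|_{\mathbb{D}_p}$ sends $G$-invariant smooth functions to $H$-invariant ones. Conversely, an $H$-invariant smooth $f$ on $\mathbb{D}_p$ lifts to the $H$-invariant smooth function $(g,y)\mapsto f(y)$ on $G\times \mathbb{D}_p$, which descends to a smooth $G$-invariant $F$ on $G\times_H\mathbb{D}_p$ satisfying $F([g,y])=f(y)$. These two operations are mutually inverse.

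\textbf{Matching critical sets and normal spaces.} Fix an $\mathrm{Ad}(H)$-invariant complement $\mathfrak{m}$ to $\mathfrak{h}$ in $\mathfrak{g}$, so that $(\xi,y)\mapsto [\exp(\xi),y]$ gives a local diffeomorphism near $(0,x)$ from $\mathfrak{m}\times\mathbb{D}_p$ onto a neighborhood of $[e,x]$ in $G\times_H\mathbb{D}_p$. In these coordinates, $F(\xi,y)=f(y)$ is independent of $\xi$. Hence $[e,x]$ is critical for $F$ iff $x$ is critical for $f$, and $\crit(F)$ is the $G$-saturation of $\crit(f)$. Moreover, $T_{[e,x]}(G\cdot [e,x])$ decomposes as $\mathfrak{m}\oplus T_x(H\cdot x)$, so the normal space to the $G$-orbit at $[e,x]$ is canonically identified with $T_x\mathbb{D}_p/T_x(H\cdot x)$, i.e.\ the normal space to the $H$-orbit in the slice.

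\textbf{Transferring the Morse-Bott and single-orbit conditions.} In the coordinates above, $\Hess_F([e,x])$ vanishes on the $\mathfrak{m}$-direction and equals $\Hess_f(x)$ on $T_x\mathbb{D}_p$. Passing to the normal quotient, the induced form on $T_{[e,x]}M/T_{[e,x]}(G\cdot [e,x])$ is identified with the induced form of $\Hess_f(x)$ on $T_x\mathbb{D}_p/T_x(H\cdot x)$. Hence $F$ is Morse-Bott iff $f$ is. The slice property also realizes a bijection between $G$-orbits in $U(G\cdot p)$ and $H$-orbits in $\mathbb{D}_p$ that respects connected components, so a connected component of $\crit(F)$ is a single $G$-orbit iff the corresponding component of $\crit(f)$ is a single $H$-orbit. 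Combining the two equivalences yields the claimed bijection.

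The bulk of the argument is a routine unpacking once the slice coordinates are in hand; the main place that requires care is the normal-space identification and the compatibility of connected components of $\crit(F)$ and $\crit(f)$, especially when $H$ is disconnected and individual $H$-orbits may break into several components. This is handled uniformly by the observation that both the orbit map $G\cdot \mathbb{D}_p\leftrightarrow G\times_H\mathbb{D}_p$ and the tangent-level identification $T_{[e,x]}(G\cdot [e,x])=\mathfrak{m}\oplus T_x(H\cdot x)$ are supplied by the slice theorem, so no auxiliary shrinking of $\mathbb{D}_p$ beyond that guaranteed by the slice theorem is required.
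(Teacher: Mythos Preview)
Your proof is correct and follows essentially the same strategy as the paper: set up the restriction/extension bijection on invariant functions, identify the critical sets and the tangent spaces of orbits via the slice, and transfer the Morse--Bott and single-orbit conditions through the resulting normal-space identification. The only stylistic difference is that you carry out the tangent-space identification $T_{[e,x]}(G\cdot[e,x])=\mathfrak{m}\oplus T_x(H\cdot x)$ via an $\mathrm{Ad}(H)$-invariant complement and local product coordinates, whereas the paper argues the same point by using transversality of the slice with $G$-orbits and a dimension count; both routes yield the same identification and neither requires shrinking the slice beyond what the slice theorem already provides.
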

\begin{proof}
Given a $G$-invariant function $f$ on $U(G\cdot p)$, we can consider its restriction $\tilde{f}:=f|_{\mathbb{D}_p}$ on the slice $\mathbb{D}_p$. $\tilde{f}$ is naturally $H$-invariant by the $G$-invariance of $f$. Assume $f$ is $G$-Morse-Bott. To see $\tilde{f}$ is $H$-Morse-Bott, first notice that the local slice $\mathbb{D}_p$ intersects transversely with any orbit of the group action by the $G$-equivariant diffeomorphism $U(G\cdot p)\cong G\times_H\mathbb{D}_p$. Thus, $\text{Crit}(\tilde{f})=\text{Crit}(f)\cap\mathbb{D}_p$ are submanifolds in $\mathbb{D}_p$. For any component $S\subseteq \text{Crit}(\tilde{f})$, we may assume $S=S'\cap \mathbb{D}_p$ where $S'$ is a component of $\text{Crit}(f)$. For $s\in S$, suppose it has stabilizer group $H'\subseteq H$. Since $\mathbb{D}_p\pitchfork_M(G\cdot s)$, it follows that \[\dim (H\cdot s)=\dim H-\dim H'=\dim(G\cdot s)+\dim \mathbb{D}_p-\dim M,\] which implies $T_s(H\cdot s)=T_s(G\cdot s)\cap T_s\mathbb{D}_p$. We can then verify the requirement of $H$-Morse-Bott since $T_sS=T_sS'\cap T_s\mathbb{D}_p=T_s(G\cdot s)\cap T_s\mathbb{D}_p$.

Conversely, given an $H$-Morse-Bott function $\tilde{f}$, we can define a $G$-invariant function $f([x,y]):=\tilde{f}(y)$ for any $[x,y]\in G\times_H \mathbb{D}_p$. Similar to the previous discussion, any component $S=H\cdot s\subseteq \text{Crit}(\tilde{f})$ corresponds to a component \[S':=\{[x,y]\,|\,y\in S\}\subseteq \text{Crit}(f).\] 
Since $S'$ can be identified with the homogeneous space $ G\times _H(H\cdot s)$, its tangent space will be spanned by the fundament vector field of the group action by $G$. Therefore, $f$ is $G$-Morse-Bott.

Finally, it is easy to check the above correspondence between $f$ and $\tilde{f}$ is one-to-one.
\end{proof}

Now, let $f$ be any $G$-Morse-Bott functions on $M$. We will argue by induction on the dimension of the manifold $M$ to perturb $f$ into a stable one. 
\begin{assumption}
    For $k\in \mathbb{Z}_{\geq 0}$, we say that \emph{$k$-stabilization} can be performed if Theorem \ref{thm:stabilization} holds true for any compact Lie group $G$ and closed manifold $M$ with $\dim M\leq k$.
\end{assumption}

Note that $0$-stabilization can be performed for the obvious reason. We will explain how to obtain the $k$-stabilization once $(k-1)$-stabilization can be achieved. The proof adapts the argument from the finite group case presented in \cite{bao2024morse}. Let us recall the construction in \cite[Corollary 6.4]{bao2024morse}.

\begin{construction}\label{construction:function}
    For small values $\lambda\gg \delta >0$, there exist smooth functions $\Phi_{\lambda},\Psi_{\delta}:\mathbb{R}\rightarrow\mathbb{R}$ such that
    \begin{itemize}
        \item $\Phi_{\lambda}(t)=t^2$ when $t\leq \lambda$;
        \item $\Phi_{\lambda}(t)=-t^2$ when $t\geq 3\lambda$;
        \item $\Phi_{\lambda}$ has exactly two critical points at $0,t_0$, where $\lambda<t_0<2\lambda$ and $\Phi_{\lambda}''(t_0)<0$.
        \item $\Psi_{\delta}$ is a plateau function which is constant with value $1$ inside $[t_0-\delta,t_0+\delta]$ and $0$ outside $[\lambda-\delta,3\lambda+\delta]$. 
    \end{itemize}
\end{construction}

\begin{figure}[htbp]
\centering
\begin{tikzpicture}
\begin{groupplot}[
  group style={group size=2 by 1, horizontal sep=1.2cm},
  width=0.45\textwidth,
  height=0.35\textwidth,
  grid=both,
  xlabel={$t$}
]

\nextgroupplot[
  title={$\Phi_{\lambda}(t)$},
  ylabel={},
  ymin=-15, ymax=2
]

\addplot[
  domain=0:1,
  samples=100,
  smooth,
  thick
] {x^2};

\addplot[
  domain=1:3,
  samples=200,
  smooth,
  thick
] {(1 - 2*(6*((x-1)/2)^5 - 15*((x-1)/2)^4 + 10*((x-1)/2)^3)) * x^2};

\addplot[
  domain=3:4,
  samples=100,
  smooth,
  thick
] {-x^2};
\nextgroupplot[
  title={$\Psi_{\delta}(t)$},
  ylabel={},
  ymin=-0.05, ymax=1.05
]

\def\tt{1.5}
\def\dd{0.15}

\addplot[smooth, thick, domain={1-\dd}:{\tt-\dd}, samples=100]
  { 6*((x-(1-\dd))/(\tt-1))^5
   -15*((x-(1-\dd))/(\tt-1))^4
   +10*((x-(1-\dd))/(\tt-1))^3 };

\addplot[smooth, thick, domain={\tt-\dd}:{\tt+\dd}, samples=2] {1};

\addplot[smooth, thick, domain={\tt+\dd}:{3+\dd}, samples=100]
  { 1 - (
     6*((x-(\tt+\dd))/(3-\tt))^5
    -15*((x-(\tt+\dd))/(3-\tt))^4
    +10*((x-(\tt+\dd))/(3-\tt))^3 ) };

\end{groupplot}
\end{tikzpicture}
\caption{The auxiliary functions $\Phi_{\lambda}$ (left) and $\Psi_\delta$ (right) appearing
in Construction \ref{construction:function} for $\lambda=1,t_0=1.5,\delta=0.1$.}
\label{fig:PhiPsi}
\end{figure}
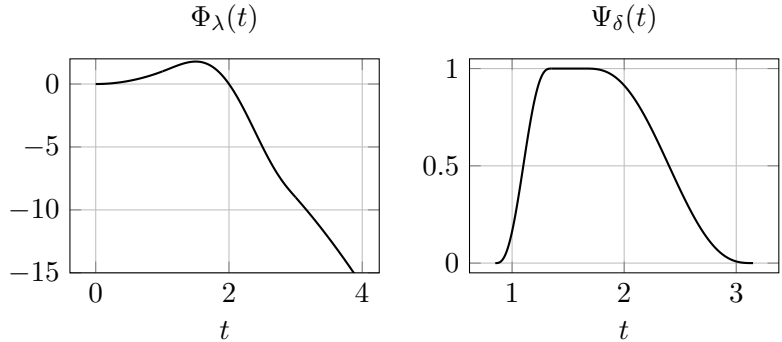

\begin{lemma}\label{lem:induction}
    Assume that $(k-1)$-stabilization can be performed. Let the compact Lie group $H$ act on $\mathbb{D}^k$, a $k$-dimensional open ball with some $H$-invariant Riemannian metric $g$. Let $f$ be an $H$-Morse-Bott function on $\mathbb{D}^k$ with a unique critical point $p\in\mathbb{D}^k$. Then, there exists another $H$-Morse-Bott function $F$ on $\mathbb{D}^k$ such that
    \begin{itemize}
        \item the critical orbits of $F$ are either $p$ or contained in the sphere centered at $p$ of radius arbitrarily smaller than the injective radius at $p$;
        \item $F$ agrees with $f$ away from a smaller ball centered at $p$ and can be arbitrarily $C^1$-close to $f$;
        \item $F$ is stable.
    \end{itemize}
\end{lemma}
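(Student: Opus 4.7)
My plan is to adapt Bao's strategy from \cite{bao2024morse}: first reduce $f$ to a Morse normal form via the equivariant Morse lemma, then flip the negative-definite $H$-nontrivial eigendirections using the radial function $\Phi_\lambda$, and finally break the resulting sphere of critical points with an inductive perturbation built from $\Psi_\delta$ and a stable $H$-Morse-Bott function on the sphere.

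For the setup, since $f$ is $H$-Morse-Bott with unique critical point $p$, the orbit $H\cdot p$ must equal $\{p\}$, so $p$ is $H$-fixed and $\Hess_f(p)$ is a non-degenerate $H$-invariant quadratic form on $N_p=T_p\mathbb{D}^k$. By the equivariant Morse lemma, there exist $H$-equivariant coordinates (obtained from $\exp_p$ followed by an $H$-equivariant linear change) in which $f$ equals exactly its Hessian quadratic form on a small ball. With the $g$-orthogonal splitting $N_p = N_p'\oplus (N_p'')^+\oplus (N_p'')^-$ from Lemma~\ref{lem:stabledefinition}, write $z=(u,v_+,v_-)$ and normalize so that
\[
f(u,v_+,v_-) = Q_1(u) + |v_+|^2 - |v_-|^2,
\]
where $Q_1$ is the (possibly indefinite) restriction of $\Hess_f(p)$ to $N_p'$. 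The stability condition reads $d:=\dim (N_p'')^-=0$; if it holds, take $F=f$. Otherwise proceed.

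Next I would perform the radial flip. Choose $\lambda>0$ so small that $\{|z|<4\lambda\}$ lies inside the coordinate patch, and set
\[
F(u,v_+,v_-) := Q_1(u) + |v_+|^2 + \Phi_\lambda(|v_-|)
\]
on the coordinate neighborhood, extended by $F=f$ elsewhere. The extension is globally smooth (since $\Phi_\lambda(t)=t^2$ near $0$), agrees with $f$ outside $\{|v_-|<3\lambda\}$ (since $\Phi_\lambda(t)=-t^2$ for $t\geq 3\lambda$), is $H$-invariant, and satisfies $\|F-f\|_{C^1}=O(\lambda)$. A direct computation gives $\Crit(F)=\{p\}\cup S$ with $S=\{(0,0,v_-):|v_-|=t_0\}\cong S^{d-1}$. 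Near $p$, the $v_-$ part has been flipped from $-|v_-|^2$ to $|v_-|^2$, so $\Hess_F(p)$ is positive definite on $N_p''$ and $p$ is already stable. However, $H$ does not act transitively on $S$ in general, so $F$ is only $H$-invariant Morse-Bott, and we must break the sphere.

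To finish, since $\dim S=d-1\leq k-1$, the $(k-1)$-stabilization hypothesis together with Theorem~\ref{thm:wasserman} supplies a stable $H$-Morse-Bott function $\phi:S\to\mathbb{R}$ with finitely many stable critical $H$-orbits $O_1,\ldots,O_N$. Radially extend $\phi$ to an $H$-invariant $\tilde\phi(v_-):=\phi(t_0 v_-/|v_-|)$ (smooth for $v_-\neq 0$) and set
\[
F_1 := F + \epsilon\,\Psi_\delta(|v_-|)\,\tilde\phi(v_-)
\]
with $\delta\ll\lambda$ and $\epsilon$ small. Since $\Psi_\delta(|v_-|)$ is supported in $|v_-|\in[\lambda-\delta,3\lambda+\delta]$ away from $v_-=0$, $F_1$ is smooth, agrees with $F$ near $p$ and outside the modified annulus, and remains $C^1$-close to $f$. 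On the plateau $\{|v_-|\in[t_0-\delta,t_0+\delta]\}$ where $\Psi_\delta\equiv 1$, critical points of $F_1$ reduce to $u=0$, $v_+=0$, $v_-/|v_-|\in\bigsqcup_i O_i$; in the transition annuli, $|\Phi_\lambda'(|v_-|)|\geq c>0$ prevents spurious critical points for small $\epsilon$. Hence $\Crit(F_1)=\{p\}\cup\bigsqcup_i t_0 O_i$, with each $t_0 O_i$ a single $H$-orbit. The main obstacle will be the stability verification at each $t_0 O_i$: the normal bundle splits $H$-equivariantly into the $N_p'$-directions (contributing $Q_1$, contained in $N_s'$), the $(N_p'')^+$-directions (Hessian $+2I$, in $N_s''$), the radial $v_-$-direction (Hessian $\Phi_\lambda''(t_0)<0$, but $H_s$-fixed because radial scaling is $H$-equivariant, hence in $N_s'$), and the tangent directions to $S$ normal to $O_i$ (Hessian $\epsilon\Hess_\phi$, whose non-fixed part is positive definite by stability of $\phi$). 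The delicate point is that the unique negative Hessian direction lies in $N_s'$, not $N_s''$, so $\Hess_{F_1}$ is positive definite on $N_s''$ and $F_1$ is the desired stable $H$-Morse-Bott function.
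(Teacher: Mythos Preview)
Your argument is correct, and the overall strategy (equivariant Morse lemma, radial flip via $\Phi_\lambda$, then break the sphere with an inductively obtained stable function and $\Psi_\delta$) is the same as the paper's. The one genuine difference is in the decomposition: you split $N_p = N_p' \oplus (N_p'')^+ \oplus (N_p'')^-$ and flip only the non-fixed negative block $(N_p'')^-$, leaving the (possibly indefinite) form $Q_1$ on $N_p'$ untouched; the auxiliary sphere then sits inside $(N_p'')^-$. The paper instead flips the \emph{entire} negative eigenspace $U_p^-$, so that the origin becomes a local minimum and the sphere $S$ lies in all of $U_p^-$. In its footnote the paper remarks that this amounts to choosing $W=0$ in the construction of \cite{bao2024morse}, whereas your version corresponds to $W=(U_p^-)^H$, which is closer to the original reference. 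Your choice buys a more economical perturbation and a lower-dimensional sphere for the inductive step; the paper's buys simpler bookkeeping (the index formula collapses to $\ind_F(C)=\ind_h(C)+1$, and stability at the origin is automatic). One small imprecision: the bound $|\Phi_\lambda'|\ge c$ on the transition annuli has $c=c_\delta$ shrinking as $\delta\to 0$ (since the annuli abut $t_0\pm\delta$), so $\epsilon$ must be taken small relative to $c_\delta$; this does not affect your conclusion.
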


\begin{proof}
By equivariant Morse lemma (\cite[Lemma 4.1]{wasserman1969equivariant}), there exists an open neighborhood $U_p$ around $p$ with coordinates $(x_1,\cdots,x_k)$ such that \begin{itemize}
    \item $U_p$ is contained in a ball with radius smaller than the injective radius at $p$;
    \item $f(x_1,\cdots,x_k)=-\sum_{i\leq l}x_i^2+\sum_{i>l}x_i^2$;
    \item $U_p^-:=\{x_{l+1}=\cdots=x_k=0\}$ and $U_p^+:=\{x_{1}=\cdots=x_l=0\}$ are $H$-invariant.
\end{itemize} Consider the sphere $S\subseteq U_p^-$ with small radius on which $H$ acts. By the assumption of $(k-1)$-stabilization, there exists a stable $H$-Morse-Bott function $h$ on $S$ since $\dim S\leq k-1$. For $\vec{x}=(x_1,\cdots,x_n)$, let $\|\vec{x}_+\|=(\sum_{i>l}x_i^2)^{\frac{1}{2}}$ and  $\|\vec{x}_-\|=(\sum_{i\leq l}x_i^2)^{\frac{1}{2}}$. Given small parameters $\lambda,\delta,\varepsilon>0$, we then define the modified function
\[F_{\lambda,\delta,\varepsilon}(x_1,\cdots,x_k)=\|\vec{x}_+\|^2+\Phi_{\lambda}(\|\vec{x}_-\|)+\varepsilon\Psi_{\delta}(\|\vec{x}_-\|)h(\frac{(x_1,\cdots,x_l)}{\|\vec{x}_-\|}).\]
Note that by taking $\lambda,\delta,\varepsilon$ to be sufficiently small, $F:=F_{\lambda,\delta,\varepsilon}$ can be arbitrarily $C^1$-close to $f$. This construction is essentially the same as \cite[Construction 6.5]{bao2024morse} by choosing $W=0$ therein\footnote{The space $W$ in \cite{bao2024morse} should be thought as the $H$-fixed part inside $U_p^-$. But the modification can actually be performed on the whole $U_p^-$.}. It follows from the computations in \cite[Proposition 6.6 (4)]{bao2024morse} that the critical submanifolds of $F_{\lambda,\delta,\varepsilon}$ are either the origin point or the critical submanifolds of $h$ inside the sphere $S$, when $\varepsilon$ is sufficiently small. Let $C\subseteq S$ be any component of the critical submanifolds. Then the computation in \cite[Proposition 6.6 (5)]{bao2024morse} shows that \[\text{Ind}_{F_{\lambda,\delta,\varepsilon}}(C)=\text{Ind}_{h}(C)+1,\] where the `$+1$' comes from the normal direction of $S\subseteq U_p^-$. This normal direction must be fixed by $H$ since $S$ is $H$-invariant. This enables us to see $F_{\lambda,\delta,\varepsilon}$ is stable from the stability of $h$. 
\end{proof}

\begin{remark}
 Let $S^{1}$ act on 
$S^{2}$ by rotation and consider the height function as an $S^{1}$-Morse--Bott 
function on $S^{2}$. Stabilizing at the critical orbit corresponding to the north 
pole produces a dimple (see Figure~\ref{fig:spherewithdimple} and Example \ref{example:sphere}). In this case, the 
function $h$ on the small circle centered at the north pole is chosen to be the 
constant function. This simple example serves as the prototype for the construction of $F_{\lambda,\delta,\varepsilon}$ in the above lemma.
\end{remark}

\begin{proof}[Proof of Theorem \ref{thm:stabilization}]
  Suppose $\dim M=k$. We will prove $k$-stabilization can be performed under the assumption that $(k-1)$-stabilization can be performed. By Theorem \ref{thm:wasserman}, we may first assume $f$ is $G$-Morse-Bott function by small perturbations. Fix some $G$-invariant Riemannian metric $g$. For any critical orbit $G\cdot p$ with stabilizer group $H\subseteq G$, we take a sufficiently small local slice $\mathbb{D}_p$ on which $H$ acts. By Lemma \ref{lem:induction}, $f|_{\mathbb{D}_p}$ admits a $C^1$-small perturbation into a stable $H$-Morse-Bott function $\tilde{F}$. By Lemma \ref{lem:correspondence}, the space of $G$-Morse-Bott functions near $G\cdot p$ corresponds to the space of $H$-Morse-Bott functions in the $H$-invariant local slice $\mathbb{D}_p$ by restrictions. Thus, $\tilde{F}$ corresponds to a $G$-Morse-Bott function $F$ on $M$ that is $C^1$-close to $f$. Since such a perturbation is local, we may assume $G\cdot p$ is the only critical orbit of $f$ and check $F$ is stable near $G\cdot p$. Let $S=G\cdot s$ be a critical orbit of $F$ and $s\in S$. By Remark \ref{rmk:stablewelldf}, we may take $s\in S\cap \mathbb{D}_p$. Assume $\text{stab}(s)=H'\subseteq H$. Note that the negative eigenspaces of the Hessian operators of $F$ and $\tilde{F}$ at $s$ can be identified since we have $\ind_F(S)=\ind_{\tilde{F}}(S\cap\mathbb{D}_p)$ by the correspondence between $F$ and $\tilde{F}$. We denote their negative eigenspace by $N_s^-$. Since $\tilde{F}$ is stable, we have $N_s^-$ is fixed by $H'$. Therefore, by Lemma \ref{lem:stabledefinition}, we see that $F$ must also be stable.
\end{proof}
\begin{remark}\label{remark:comparisonMayer}
If we perturb $f$ by retaining only the first two terms 
$\|\vec{x}_+\|^{2}+\Phi_{\lambda}(\|\vec{x}_-\|)$ in 
$F_{\lambda,\delta,\varepsilon}$, stability at $0$ is still ensured, but the 
$G$-Morse-Bott property is lost. The approach in 
\cite{mayer1989Ginvariant} is to abstractly apply Wasserman’s result 
(Theorem~\ref{thm:wasserman}) to obtain a small $G$-Morse-Bott perturbation, and 
then use a $G$-invariant cutoff function to interpolate between the two functions. 
Our method is more explicit if we start with an $G$-Morse-Bott function: we specify the function $h$ on the lower-dimensional 
sphere $S$, which provides concrete guidance for performing the stabilization in 
specific situations. See Example~\ref{example:mapping torus}.

\end{remark}

\section{Genericity of Morse-Bott-Smale metrics}\label{section:genericity}

\subsection{Morse-Bott case}
In this section, we review the case when there is no group action, i.e. $G = \{e\}$. When $G = \{e\}$, $G$-Morse-Bott functions are the same as the Morse functions. However, for comparison purposes, we consider a larger class of functions, Morse-Bott functions. 

To define Morse-Bott cohomology, depending on the choice of cochain model on the 
critical submanifolds, one may impose various transversality conditions; see, for instance,  \cite{Fukaya96,AustinBraam,hutchings2002lecture,BanyagaHurtubise,zhuang2025analytictopologicalrealizationsinvariant}. A first and typically indispensable requirement is that the moduli spaces of gradient flow lines be transversely cut out. This is the \emph{weakly Morse-Bott-Smale} condition introduced below, which is a weaker version of Definition \ref{def:MBS}.

\begin{definition}[Weakly Morse-Bott-Smale]\label{def:weakMBS}
   The pair $(f,g)$ of a Morse-Bott function $f$ and a Riemannian metric $g$ is called \emph{weakly Morse-Bott-Smale} (or \emph{weakly MBS} for short) if for any two components $S_1,S_2$ of $\crit(f)$, the descending manifold $\des(S_1) = \des_{f,g}(S_1)$ of $S_1$ intersects the ascending manifold $\asc(S_2) = \asc_{f,g}(S_2)$ of $S_2$ transversely. 
\end{definition}

\begin{lemma}[MBS implies weakly MBS]
    If the pair $(f,g)$ is Morse-Bott-Smale, then it is also weakly Morse-Bott-Smale.
\end{lemma}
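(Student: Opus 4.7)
The plan is to reduce the weakly MBS transversality statement for the full descending manifold $\des(S_1)$ to the pointwise MBS transversality of $\des(p)$ that we already have by hypothesis. The key structural observation is that, for a Morse-Bott function, the descending manifold $\des(S_1)$ is smooth and fibers over $S_1$ via the limit map $\pi\colon \des(S_1)\to S_1$, $x\mapsto \lim_{t\to-\infty}\varphi_t(x)$, with fiber $\pi^{-1}(p)=\des(p)$. In particular, at any $x\in\des(p)\subseteq\des(S_1)$ there is a short exact sequence
\[
0\longrightarrow T_x\des(p)\longrightarrow T_x\des(S_1)\longrightarrow T_pS_1\longrightarrow 0,
\]
so $T_x\des(p)\subseteq T_x\des(S_1)$.

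Given this, I would proceed as follows. First, pick any intersection point $x\in\des(S_1)\cap\asc(S_2)$ and let $p:=\lim_{t\to-\infty}\varphi_t(x)\in S_1$, so that $x\in\des(p)$. Second, invoke the MBS hypothesis at $p$ to conclude
\[
T_x\des(p)+T_x\asc(S_2)=T_xM.
\]
Third, use the inclusion $T_x\des(p)\subseteq T_x\des(S_1)$ from the fibration observation to upgrade this to
\[
T_x\des(S_1)+T_x\asc(S_2)=T_xM,
\]
which is precisely the transversality condition defining weakly MBS.

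There is essentially no hard step here; the whole argument is a one-line consequence once one records that $\des(S_1)$ is foliated by the $\des(p)$'s with $T_x\des(p)\subseteq T_x\des(S_1)$. The only mild subtlety worth stating explicitly (and which I would either prove briefly or cite from the earlier Morse--Bott setup) is the smoothness of $\des(S_1)$ and the identification of its tangent space along a fiber, since without this the inclusion $T_x\des(p)\subseteq T_x\des(S_1)$ is not automatic. After that the proof reduces to a single linear-algebra line and the lemma follows.
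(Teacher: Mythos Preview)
Your proof is correct and follows essentially the same approach as the paper: pick $x\in\des(S_1)\cap\asc(S_2)$, find $p\in S_1$ with $x\in\des(p)$, apply the MBS hypothesis to get $T_xM=T_x\des(p)+T_x\asc(S_2)$, and use the inclusion $T_x\des(p)\subseteq T_x\des(S_1)$. The paper's version is slightly terser, invoking the inclusion directly from $\des(p)\subseteq\des(S_1)$ rather than via the fibration and short exact sequence you describe, but the argument is the same.
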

\begin{proof}
    For any $x\in \des(S_1)\cap \asc(S_2)$, there will exist some $p\in S_1$ such that $x\in \des(p)\cap \asc(S_2)$. If $(f,g)$ is Morse-Bott-Smale, then we have
    \[T_xM=T_x\des(p)+T_x\asc(S_2)\subseteq T_x\des(S_1)+T_x\asc(S_2),\]
    which implies that $(f,g)$ is weakly Morse-Bott-Smale.
\end{proof}

The notion of `Morse-Bott-Smale' is strictly stronger than `weakly Morse-Bott-Smale' as indicated by the following example of Latschev \cite[Remark 2.4]{Latschev00}.
\begin{example}[Torus with legs \cite{Latschev00}]\label{example:hornedtorus}
    Consider the $2$-torus with coordinates $(\varphi,\psi)\in[0,2\pi)\times[0,2\pi)$ and the Morse-Bott function\footnote{Our function differs from that in \cite{Latschev00} by a negative sign, since we use negative gradient flows rather than gradient flows.}  $$h_n(\varphi,\psi)=-(2+\cos n\varphi)(1+\cos \psi)$$ for any positive integer $n$. It has the maximal circle \[S=\{\psi=\pi\}\] and $n$ saddle points \[\{p_k=(\frac{(2k+1)\pi}{n},0)\}_{0\leq k\leq n-1}\] as critical submanifolds of index $1$; $n$ minimal points \[\{q_k=(\frac{2k\pi}{n},0)\}_{0\leq k\leq n-1}\] as critical submanifolds of index $0$. If we embed the torus in $\mathbb{R}^3$ equipped with the standard metric and view $h_n$ as the height function as shown in Figure~\ref{fig:hornedtorus}, then the weakly Morse-Bott-Smale condition can be easily verified. However, since $h_n(S)>h_n(p_k)>h_n(q_k)$, there always exist flow trajectories from some point $s\in S$ to the saddle points $p_k$ for any metric. This implies that $\des(s)$ is not transverse to $\asc(p_k)$. Therefore, no metric could satisfy the Morse-Bott-Smale condition. 
\end{example}
\begin{figure}[htbp]
    \centering
    \begin{subfigure}[b]{0.4 \textwidth}
        \centering
        \includegraphics[width=\linewidth]{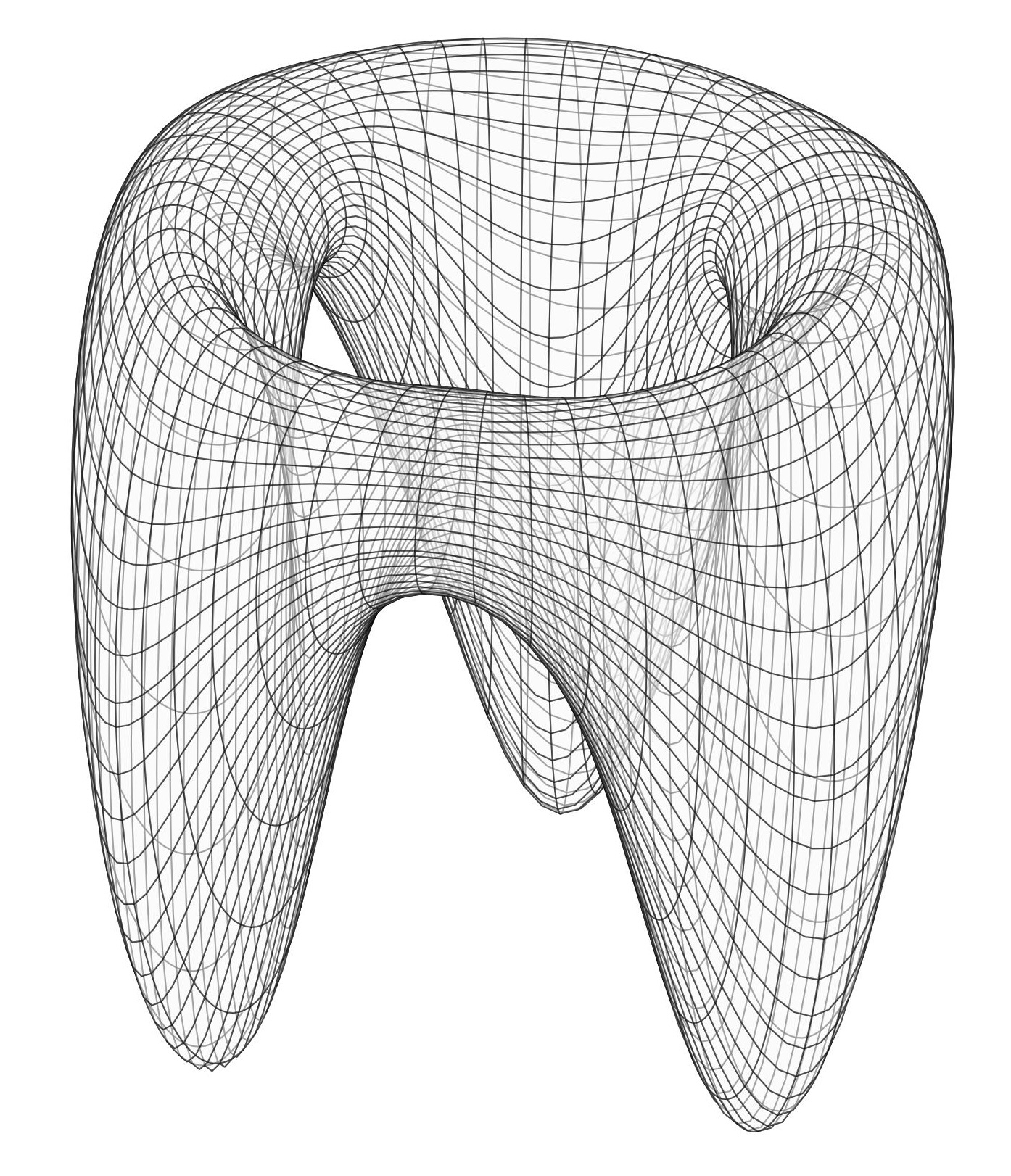}
        \caption{\small A torus with legs}
        \label{fig:hornedtorus}
    \end{subfigure}
    \hspace{2cm}
    \begin{subfigure}[b]{0.4\textwidth}
        \centering
        \includegraphics[width=\linewidth]{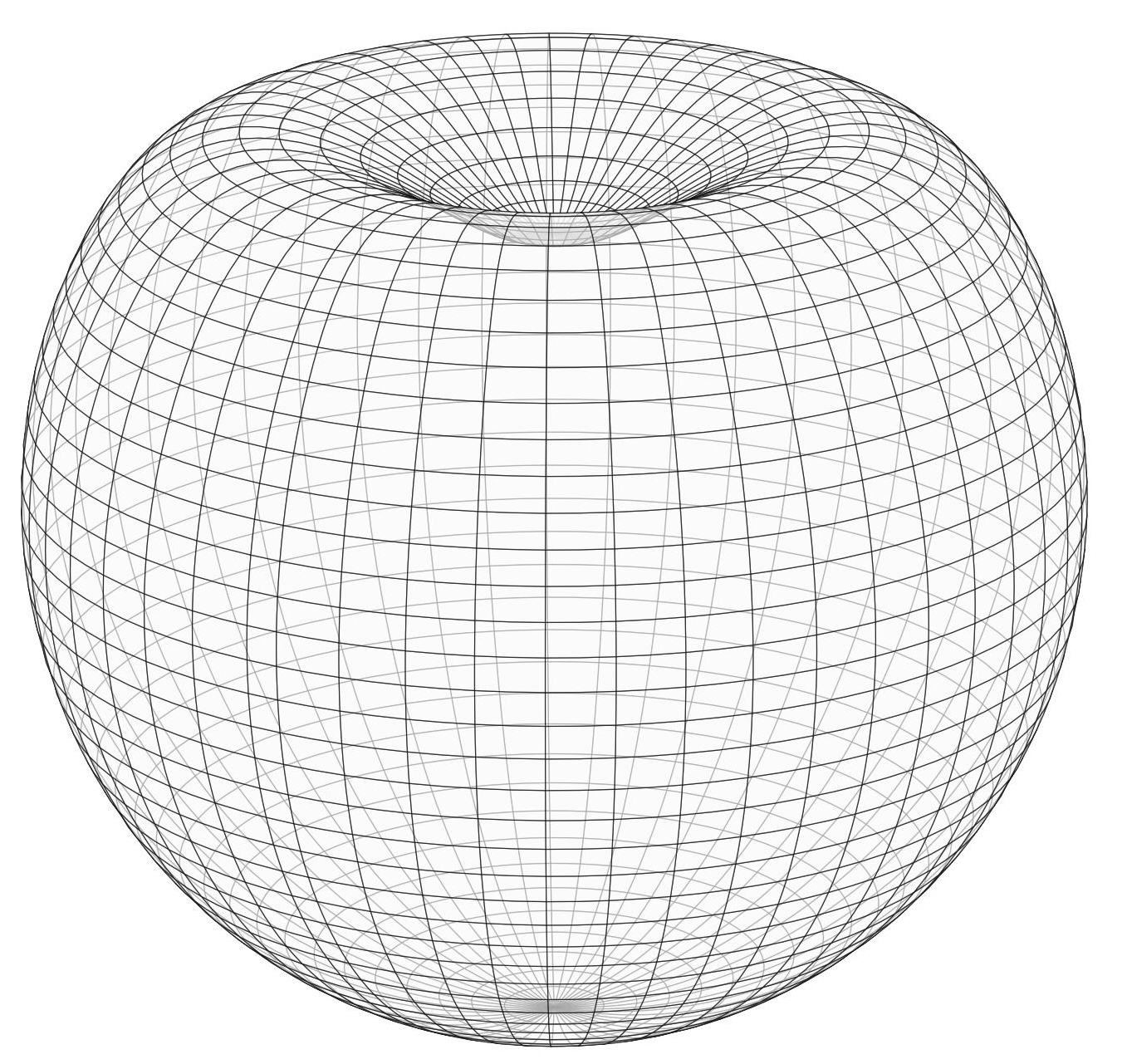}
        \caption{\small A sphere with a dimple}
        \label{fig:spherewithdimple}
    \end{subfigure}
    \label{fig:deformed_shapes}
\end{figure}

For comparison, we state the following folklore theorem (see for example \cite[ Section~6.2.1]{hutchings2002lecture} or \cite[Section 8]{ZhouMB}), whose proof is a standard modification of the Morse case (see for example \cite[Proposition~5.8]{hutchings2002lecture}).

\begin{theorem}\label{thm:weakMBS} 
    Given a Morse-Bott $C^{k+1}$-function $f$ on $M$, a generic $C^k$-metric $g$ makes the pair $(f,g)$ weakly Morse-Bott-Smale.
\end{theorem}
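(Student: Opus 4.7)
The plan is to run the standard parametric Sard-Smale argument on a universal moduli space of flow lines, adapted to the Morse-Bott setting by allowing asymptotes in submanifolds rather than at single points. Fix two (not necessarily distinct) connected components $S_1, S_2 \subseteq \crit(f)$, let $\mathcal{G}^k$ denote the Banach manifold of $C^k$ Riemannian metrics on $M$, and form an appropriate Banach manifold $\mathcal{P}$ of paths $\gamma:\RR \to M$ of class $W^{1,p}_{\mathrm{loc}}$ that converge exponentially as $t\to\pm\infty$ to points in $S_1, S_2$. The universal moduli space $\widetilde{\mathcal{M}}(S_1, S_2)$ is then the zero set of the section
\[
\mathcal{F}: \mathcal{P}\times\mathcal{G}^k \to L^p(\gamma^*TM), \qquad \mathcal{F}(\gamma, g) = \dot\gamma + \nabla_g f(\gamma).
\]

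The technical heart of the argument is to show that the linearization $D\mathcal{F}(\gamma, g)$ is surjective at every zero. The partial linearization $D_\gamma\mathcal{F}$ is a linear Fredholm operator, and its cokernel is identified via $L^2$-pairing with smooth solutions $\eta$ of an adjoint linear ODE along $\gamma$. One then shows that for each nonzero such $\eta$, a suitable metric variation $h$ gives
\[
\int_\RR \langle \eta(t),\, \delta_h(\nabla_g f)(\gamma(t)) \rangle_g \, dt \neq 0.
\]
By unique continuation for linear ODEs, $\eta$ is nonvanishing on a dense open subset of $\RR$; pick any interior point $t_0$ with $\eta(t_0)\neq 0$, which is automatically non-critical, so $df(\gamma(t_0))\neq 0$. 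A standard pointwise computation then shows that the map $h\mapsto \delta_h(\nabla_g f)(\gamma(t_0))$, restricted to metric perturbations compactly supported near $\gamma(t_0)$, surjects onto $T_{\gamma(t_0)}M$, so $h$ can be chosen to make the integrand positive on a small interval around $t_0$.

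Given surjectivity, $\widetilde{\mathcal{M}}(S_1, S_2)$ is a Banach manifold, and Sard-Smale applied to the projection $\widetilde{\mathcal{M}}(S_1, S_2)\to \mathcal{G}^k$ yields a residual set of regular values $g$. For such $g$, the fiber $\mathcal{M}_g(S_1, S_2)$ is cut out transversally, which is equivalent to $\des_g(S_1) \pitchfork \asc_g(S_2)$ as submanifolds of $M$. Intersecting over the (finite) collection of pairs $(S_1, S_2)$ of critical components yields the residual set of weakly MBS $C^k$-metrics; a standard Taubes-type argument promotes $C^k$-genericity to $C^\infty$-genericity if desired.

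The main obstacle is the pointwise surjectivity lemma above combined with the requirement that the perturbation $h$ preserve the exponentially decaying asymptotics of $\gamma$. Both issues are resolved by supporting $h$ in a compact set disjoint from small neighborhoods of $S_1$ and $S_2$ and by working in weighted Sobolev spaces where the asymptotic Fredholm theory is standard.
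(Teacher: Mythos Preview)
The paper does not supply its own proof of this theorem: it is stated as folklore with a pointer to the standard Morse-case argument in \cite[Proposition~5.8]{hutchings2002lecture} and its Morse-Bott adaptation in \cite[Section~6.2.1]{hutchings2002lecture} and \cite[Section~8]{ZhouMB}. Your outline is exactly that standard parametric Sard--Smale argument, and the ingredients you list (Fredholm property via weighted Sobolev asymptotics, cokernel killed by compactly supported metric variations at a non-critical point where $\eta\neq 0$, then Sard--Smale on the projection) are the correct ones, so there is nothing to compare---you have reproduced the referenced proof.
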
 





The previous Example \ref{example:hornedtorus} also shows Theorem \ref{thm:weakMBS} is not true if we replace `weakly Morse-Bott-Smale' by `Morse-Bott-Smale'. However, in the next section, we will show that in the presence of group symmetry, the stability condition enables us to upgrade Theorem \ref{thm:weakMBS} to `Morse-Bott-Smale', i.e., Theorem \ref{thm:Morse-Bott-Smale-is-generic}.

\subsection{\texorpdfstring{$G$-Morse-Bott case}{G-Morse-Bott case}}

In the ``problematic'' Example~\ref{example:hornedtorus}, let $G= \ZZ_n$ act on the torus by rotating the $\varphi$-coordinate by $\frac{2\pi}{n}$. Then $G$ acts freely on the torus and preserves the function $h_n$.
Hence the function $h_n$ is $G$-invariant, Morse-Bott, and stable (with respect to $G$).
But it is \emph{not} $G$-Morse-Bott, as it does not satisfy the second condition in Definition \ref{definition:G-Morse-Bott}.
The following Lemma explains the advantage of considering $G$-Morse-Bott functions.

\begin{lemma}[Weakly MBS implies MBS]\label{lem:equivMBS}
    Let the compact Lie group $G$ act on the closed manifold $M$ with a $G$-Morse-Bott function $f$ and a $G$-invariant metric $g$. Assume $G\cdot p$ and $G\cdot q$ are two critical orbits of $f$. Then $\des(p)\pitchfork_M \asc(G\cdot q)$ if and only if $\des(G\cdot p)\pitchfork_M \asc(G\cdot q)$.
\end{lemma}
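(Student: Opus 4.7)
The plan is to reduce the equivalence to a single key identity,
\[
T_x \des(G\cdot p) \;=\; T_x \des(p') + T_x(G\cdot x),
\]
valid at any $x \in \des(p')$ with $p' \in G\cdot p$, and then to exploit the $G$-equivariance of both the flow and $\asc(G\cdot q)$.

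First I would record the standard consequences of $G$-invariance of $f$ and $g$: the negative gradient flow satisfies $h\cdot \varphi_t(x) = \varphi_t(h\cdot x)$ for $h\in G$, which yields $h\cdot \des(p) = \des(h\cdot p)$, $\des(G\cdot p) = G\cdot \des(p)$, and $h\cdot \asc(G\cdot q) = \asc(G\cdot q)$ (the last because $G\cdot q$ is $G$-invariant). Because $f$ is $G$-Morse-Bott, the critical orbit $G\cdot p$ is a connected component of $\crit(f)$, so $\des(G\cdot p)$ carries its usual smooth structure as the total space of the unstable bundle over the critical submanifold. The endpoint map $\pi\colon \des(G\cdot p)\to G\cdot p$, $x\mapsto \lim_{t\to-\infty}\varphi_t(x)$, is a $G$-equivariant submersion with fiber $\pi^{-1}(p')=\des(p')$, giving a short exact sequence
\[
0 \to T_x\des(p') \to T_x\des(G\cdot p) \xrightarrow{d\pi_x} T_{p'}(G\cdot p)\to 0.
\]
Since the fundamental vector fields $\xi_M$ for $\xi\in\mathfrak{g}$ satisfy $d\pi_x(\xi_M(x))=\xi_M(p')$ and collectively surject onto $T_{p'}(G\cdot p)$, the key identity follows.

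For the backward direction (weakly MBS $\Rightarrow$ MBS for this pair), take $x\in\des(p)\cap\asc(G\cdot q)$. By hypothesis $T_xM = T_x\des(G\cdot p)+T_x\asc(G\cdot q)$; applying the key identity with $p'=p$ and using $T_x(G\cdot x)\subseteq T_x\asc(G\cdot q)$ (a consequence of $G$-invariance of $\asc(G\cdot q)$), the $T_x(G\cdot x)$ summand is absorbed into $T_x\asc(G\cdot q)$, yielding $T_xM = T_x\des(p)+T_x\asc(G\cdot q)$. The forward direction is easier: any $x\in\des(G\cdot p)\cap\asc(G\cdot q)$ lies in some $\des(p')$ with $p'=h\cdot p$; translating by $h^{-1}$ produces a point of $\des(p)\cap\asc(G\cdot q)$ at which transversality holds by assumption, and then $dL_h$ transports this back, giving $T_xM = T_x\des(p')+T_x\asc(G\cdot q)\subseteq T_x\des(G\cdot p)+T_x\asc(G\cdot q)$.

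The one nontrivial ingredient is the $G$-equivariant fibration structure of $\pi\colon\des(G\cdot p)\to G\cdot p$; this should follow from the Morse-Bott normal form along the critical orbit together with $G$-equivariance of the gradient flow, so I would either cite it or sketch it briefly. Everything else is a short diagram chase with tangent spaces.
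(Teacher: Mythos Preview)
Your proposal is correct and structurally mirrors the paper's proof: both establish the key identity $T_x\des(G\cdot p) = T_x\des(p) + T_x(G\cdot x)$ and then absorb the orbit direction into $T_x\asc(G\cdot q)$ via $G$-invariance. The only substantive difference lies in how the identity is justified. You invoke the fibration structure of the endpoint map $\pi\colon\des(G\cdot p)\to G\cdot p$ (which you rightly flag as the one nontrivial ingredient), obtaining the identity at every $x$ at once via the short exact sequence. The paper instead avoids citing the fibration: it first flows $x$ arbitrarily close to $p$ (transversality is preserved along flow lines), observes that at $p$ itself one has $T_p\des(p)\cap T_p(G\cdot p)=0$ so the identity holds there by a dimension count, and then extends to nearby $x$ by continuity of the map $(v,\xi)\mapsto v+\xi_M(x)$ into $T_x\des(G\cdot p)$. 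Your route is cleaner and more conceptual; the paper's is more self-contained, since it does not rely on the smooth submersion property of $\pi$ away from the critical orbit.
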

\begin{proof}
    We only need to prove the `if' direction. For any $x\in \des(p)\cap\asc(G\cdot q)$, assume \[T_x\des(G\cdot p)+T_x\asc(G\cdot q)=T_xM.\] Our goal is to show \[T_x\des(p)+T_x\asc(G\cdot q)=T_xM.\] We may further assume that $x$ is arbitrarily close to the critical point $p$ due to the following simple fact.
    \begin{claim}\label{claim:transverse}
   Let $\gamma: \R \to M$ be a gradient flow line of $(f,g)$ from $p$ to some point in $G\cdot q$. Then for any $s, s' \in \R$, $\des_{g}(p)$ and $\asc_{g}(G\cdot q)$ intersect transversely in $M$ at $\gamma(s)$ if and only if they  intersect transversely at $\gamma(s')$.
\end{claim}
\begin{proof}[Proof of Claim]
This follows from the observation that the descending and ascending manifolds are invariant under the gradient flows. So, it maps $T_{\gamma(s)} \des(p)$ isomorphically to $T_{\gamma(s')} \des(p)$, and maps $T_{\gamma(s)} \asc(G\cdot q)$ isomorphically to $T_{\gamma(s')} \asc(G\cdot q)$. 
\end{proof}

    On the one hand, since the pair $(f,g)$ is $G$-invariant, the orbit $G\cdot x\subseteq\des(G\cdot p)$ which implies that 
    \[T_x\des(p)+T_x(G\cdot x)\subseteq T_x\des(G\cdot p).\]
    
    On the other hand, let us take a basis in the Lie algebra  $\g$ and consider their corresponding fundamental vector fields $X_1,\cdots,X_{\text{dim}G}$ on $M$ which span the tangent spaces of the orbits. At the point $p$, we have\[0=T_p\des(p)\cap T_p(G\cdot p)=T_p\des(p)\cap \text{span}\{X_1,\cdots,X_{\text{dim}G}\}|_p.\] For dimensional reason, it follows that \[T_p\des(G\cdot p)=T_p\des(p)+\text{span}\{X_1,\cdots,X_{\text{dim}G}\}|_p.\] Since $x$ is sufficiently close to $p$, by continuity we have \[T_x\des(G\cdot p)=T_x\des(p)+\text{span}\{X_1,\cdots,X_{\text{dim}G}\}|_x=T_x\des(p)+ T_x (G\cdot x).\] 
    
    Finally, the $G$-invariance of $(f,g)$ also implies that $T_x(G\cdot x)\subseteq T_x\asc(G\cdot q)$. Therefore, we can see that 
    \begin{align*}
        T_x\des(p)+T_x\asc(G\cdot q)&= T_x\des(p)+T_x(G\cdot x)+T_x\asc(G\cdot q)\\
        &=T_x\des(G\cdot p)+T_x\asc(G\cdot q)\\
        &=T_xM.
    \end{align*}
\end{proof}

\begin{proof}[Proof of Theorem \ref{thm:Morse-Bott-Smale-is-generic}]
    Let \(G\cdot p,G\cdot q\) be two critical orbits of the stable $G$-Morse-Bott function $f$ with \(H=\stab(p)\), \(H'=\stab(q)\).  Given any \(G\)-invariant Riemannian metric \(g\) of class $C^k$ on \(M\), by Lemma \ref{lem:equivMBS}, our goal is to show that a generic \(G\)-invariant perturbation \(g'\) of \(g\) supported near \(G\cdot p\), will achieve the weakly Morse-Bott-Smale transversality between $\des_{g'}(G\cdot p)$ and $\asc_{g'}(G\cdot q)$. 
    
    We denote by $(H)$ the set of subgroups of $G$ which are conjugate to $H$. There is a $G$-invariant submanifold (see for example \cite[Section 1.5]{Groupactionnotes})
    \begin{align*}
        M_{(H)}=\{x\in M\,|\,\text{stab}(x)\in(H)\}
    \end{align*} which has the structure of a $G/H$-bundle  \[G/H\hookrightarrow M_{(H)}\xrightarrow{\pi} X_{(H)}:=M_{(H)}/G.\] $M_{(H)}$ is the flow out $G\cdot M_H$ of the (typically non-closed) submanifold \[M_H=\{x\in M\,|\,\text{stab}(x)=H\}.\]  We first have the following observation.
\begin{claim}\label{claim:invariant}
  For any $G$-invariant metric $g'$, the gradient vector field of $(f,g')$ restricted to $M_H$ (resp. $M_{(H)}$) must be tangent to $M_H$  (resp. $M_{(H)}$). Consequently, \[\des_{f,g'}(p)\subseteq M_H, \des_{f,g'}(G\cdot p)\subseteq M_{(H)}.\]
\end{claim}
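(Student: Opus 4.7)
The plan is to split the claim into two parts: the tangency of $\nabla_{g'} f$ to the orbit-type submanifolds (which is a pure equivariance statement and requires neither stability nor the $G$-Morse-Bott property), and the descending manifold inclusions (which additionally exploit the stability hypothesis carried over from Theorem \ref{thm:Morse-Bott-Smale-is-generic}).

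For the tangency statements, I would begin by observing that $G$-invariance of $f$ and $g'$ makes the gradient vector field $\nabla_{g'} f$ a $G$-equivariant section of $TM$. At any point $x \in M_H$ with $\stab(x) = H$, equivariance forces $\nabla_{g'} f(x) \in (T_x M)^H$. Since $M_H$ is open in the smooth fixed-point submanifold $M^H$, whose tangent space at each point of $M^H$ is precisely $(T_x M)^H$, this yields tangency to $M_H$. Tangency to $M_{(H)} = G \cdot M_H$ then follows by $G$-translating: any $x \in M_{(H)}$ lies in some $M_{H'}$ with $H' = g H g^{-1}$, and pushing forward from a representative in $M_H$ along $g$ (or repeating the same pointwise argument at $x$) gives $\nabla_{g'} f(x) \in T_x M_{H'} \subseteq T_x M_{(H)}$.

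For the descending manifold inclusions, the strategy is to establish a local statement near $p$ using stability, and then propagate it globally via flow-invariance. By Lemma \ref{lem:stabledefinition}, the stability hypothesis guarantees that the negative eigenspace of $\Hess_f(p)$ is contained in $N_p' \subseteq (T_p M)^H$. Hence the unstable subspace of the linearized negative gradient flow at $p$ lies in $T_p M^H$, and combined with the flow-invariance of $M^H$ (itself a consequence of the tangency just proved) this forces the unstable manifold $W^u(p)$ to lie in $M^H$. A slice-theorem computation of stabilizers in a tubular neighborhood of $G \cdot p$ shows that $M^H = M_H$ in a neighborhood of $p$, so the portion of $W^u(p)$ close to $p$ lies in $M_H$. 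Finally, $G$-equivariance of the flow implies it preserves stabilizers exactly, so $M_H$ is itself flow-invariant; if $x \in \des_{f,g'}(p)$, then $\varphi_t(x) \in M_H$ for $t$ sufficiently negative, whence $x \in M_H$. The inclusion $\des_{f,g'}(G \cdot p) \subseteq M_{(H)}$ then reduces to $G$-equivariance: $\des_{f,g'}(G \cdot p) = G \cdot \des_{f,g'}(p) \subseteq G \cdot M_H = M_{(H)}$.

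The main obstacle I anticipate is the global propagation argument: $M_H$ is not closed in $M$, so one must carefully rule out the possibility that $W^u(p)$ escapes $M_H$ by meeting a lower stratum. This is precisely where the interplay between stability (which provides the local containment of $W^u(p)$ in $M^H$ near $p$) and the exact preservation of stabilizers under the $G$-equivariant flow (which upgrades flow-invariance from $M^H$ to the open subset $M_H$) becomes essential.
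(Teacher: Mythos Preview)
Your proposal is correct, and the overall structure matches the paper's: first establish that the gradient is tangent to $M_H$ (and $M_{(H)}$) by $G$-equivariance, then use stability to pin down $\des_{f,g'}(p)$ inside $M_H$. The details, however, are organized differently.

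For tangency, you argue pointwise: at $x\in M_H$ the vector $\nabla_{g'}f(x)$ is $H$-fixed, hence lies in $(T_xM)^H=T_xM^H$, and $M_H$ is open in $M^H$. The paper instead argues at the level of flows: $G$-invariance of $(f,g')$ means the flow commutes with the $G$-action, so by ODE uniqueness a flow line through $M_H$ cannot leave $M_H$, and tangency follows. Your route is more direct; the paper's has the small advantage that it immediately gives flow-invariance of $M_H$ (not just of $M^H$), which you need later anyway.

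For the inclusion $\des_{f,g'}(p)\subseteq M_H$, the paper compresses everything into a dimension count: tangency gives $\des_{f|_{M_H},\,g'|_{M_H}}(p)\subseteq \des_{f,g'}(p)$, and stability forces both descending manifolds to have dimension $\ind_f(p)$, ``hence they are equal.'' Your local-plus-propagation argument (unstable subspace in $T_pM^H$ by Lemma~\ref{lem:stabledefinition}, uniqueness of the local unstable manifold inside the flow-invariant $M^H$, then $M^H=M_H$ near $p$, then exact stabilizer preservation to globalize) is really the content hidden behind that one line; it is longer but makes transparent why the inclusion cannot leak into a lower stratum, which is exactly the ``main obstacle'' you flagged. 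Either packaging is fine.
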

\begin{proof}[Proof of Claim]
    Since the pair $(f,g')$ is $G$-invariant, the action of $G$ preserves gradient flow lines. Thus, any gradient flow line that intersects $M_H$ can not leave $M_H$ by the uniqueness theorem of ODE. It follows that the gradient vector field restricted to $M_H$ must be tangent to $M_H$. This further implies that \[\des_{f|_{M_H},g'|_{M_H}}(p)\subseteq \des_{f,g'}(p).\] Since $f$ is stable, these two descending manifolds have the same dimension and thus must be equal. Therefore, $\des_{f,g'}(p)\subseteq M_H$. Since $M_{(H)}$ is the flow out of $M_H$, we also have the conclusion for $M_{(H)}$.
\end{proof}
    Our proof will proceed in three steps:
    \begin{enumerate}[label=\arabic*.]
       \item Upgrade transversality from $X_{(H)}$ to $M_{(H)}$ using the $G$-symmetry.
        \item Achieve transversality on the quotient space $X_{(H)}$ using Sard-Smale argument, where no group action remains.
        \item Upgrade transversality from $M_{(H)}$ to $M$ using the stability condition at the critical orbit $G\cdot q$.
    \end{enumerate}

\medskip
 \noindent\textbf{Step 1}

Restricting the pair $(f,g)$ to $M_{(H)}$, we obtain a pair 
$(\tilde f,\tilde g)$ on $M_{(H)}$. Since $f$, $g$, and $M_{(H)}$ are $G$-invariant, the pair $(\tilde f,\tilde g)$ is also $G$-invariant. Moreover, the following correspondences between $M_{(H)}$ and $X_{(H)}$ are straightforward.
\begin{itemize}
    \item There exists a function $\overline f$ on $X_{(H)}$ such that $\tilde f=\overline f\circ\pi$. Since $f$ is $G$-Morse-Bott, the function $\overline f$ is Morse with a critical point at $\pi(p)$. 
    \item The metric $\tilde{g}$ induces an orthogonal splitting of the tangent bundle \[TM_{(H)}\cong (TM_{(H)})^{\text{ver}}\oplus (TM_{(H)})^{\text{hor}}\] into the vertical part and the horizontal part \begin{gather*}
        (TM_{(H)})^{\text{ver}}:=\ker d\pi, \quad(TM_{(H)})^{\text{hor}}:=(\ker d\pi)^{\perp}.
    \end{gather*} 
     The restriction of $\tilde{g}$ to the horizontal distribution $(TM_{(H)})^{\text{hor}}$ descends to a metric $\overline{g}$ on $X_{(H)}$ such that $\pi:(M_{(H)},\tilde{g})\rightarrow (X_{(H)},\overline{g})$ is a Riemannian submersion;
    \item Under the above splitting, any metric $\overline{g}'$ on $X_{(H)}$ determines a metric $\tilde{g}'$ on $M_{(H)}$ by replacing $\tilde{g}$ on the horizontal distribution $(TM_{(H)})^{\text{hor}}\cong \pi^*TX_{(H)}$ with $\pi^*\overline{g}'$; 
    \item If $\gamma(t)$ is a gradient flow line of $(\tilde{f},\tilde{g}')$ in $M_{(H)}$, then its projection $\pi(\gamma(t))$ is a gradient flow line of $(\overline{f},\overline{g}')$ in $X_{(H)}$. 
\end{itemize}

With this understood, we now aim to show the transversality of the pair $(\overline{f},\overline{g}')$ will imply the transversality of the pair $(\tilde{f},\tilde{g}')$. Let $\phi_t$ be the negative gradient flow of $(\overline{f},\overline{g}')$ and assume $\phi_t$ exists for all time. We will use \[\des_{\overline{f},\overline{g}'}(\pi(p)):=\{x\in X_{(H)}\,|\,\lim_{t\rightarrow-\infty}\phi_t(x)=\pi(p)\}\] to denote the descending manifold at $\pi(p)$ in the usual sense. For ascending manifold, although the point $q$ may have larger stabilizer group $H'\supsetneq H$ and not belong to $M_{(H)}$, we will still use \[\asc_{\overline{f},\overline{g}'}(\pi(q)):=\{x\in X_{(H)}\,|\,\lim_{t\rightarrow+\infty}d_g(\pi^{-1}(\phi_t(x)),G\cdot q)=0\}\]
to denote the union of flow lines in $X_{(H)}$ whose lifts in $M_{(H)}\subseteq M$ converge to $G\cdot q$ in the Riemannian distance $d_g$.

\begin{claim}\label{claim:liftfromXHtoMH}
    Under the above setting, if  \[\des_{\overline{f},\overline{g}'}(\pi(p))\pitchfork_{X_{(H)}}\asc_{\overline{f},\overline{g}'}(\pi(q)),\]
    then \[\des_{\tilde{f},\tilde{g}'}(G\cdot p)\pitchfork_{M_{(H)}}\pi^{-1}(\asc_{\overline{f},\overline{g}'}(\pi(q))).\]
\end{claim}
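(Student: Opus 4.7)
The proof splits naturally into two parts: (i) identifying $\des_{\tilde{f},\tilde{g}'}(G\cdot p)$ with the preimage $\pi^{-1}(\des_{\overline{f},\overline{g}'}(\pi(p)))$, and (ii) invoking the general fact that submersions pull transverse intersections back to transverse intersections.

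For (i), the key input is that the $\tilde{g}'$-gradient of a $G$-invariant function is horizontal with respect to the splitting $TM_{(H)} = (TM_{(H)})^{\mathrm{ver}} \oplus (TM_{(H)})^{\mathrm{hor}}$: since $\tilde{f}$ is constant along $G$-orbits, $d\tilde{f}$ annihilates the vertical distribution, so $\nabla_{\tilde{g}'}\tilde{f}$ lies in the horizontal distribution. Because $\pi:(M_{(H)},\tilde{g}')\to(X_{(H)},\overline{g}')$ is a Riemannian submersion by construction of $\tilde{g}'$, the gradient $\nabla_{\tilde{g}'}\tilde{f}$ is $\pi$-related to $\nabla_{\overline{g}'}\overline{f}$, and conversely horizontal lifts of downstairs gradient trajectories are upstairs gradient trajectories. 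Thus a point $x\in M_{(H)}$ satisfies $\lim_{t\to-\infty}\varphi_t(x)\in G\cdot p$ if and only if $\lim_{t\to-\infty}\phi_t(\pi(x))=\pi(p)$. Combined with Claim \ref{claim:invariant}, which ensures that $\des_{\tilde{f},\tilde{g}'}(G\cdot p)$ stays inside $M_{(H)}$, this yields the desired identification $\des_{\tilde{f},\tilde{g}'}(G\cdot p)=\pi^{-1}(\des_{\overline{f},\overline{g}'}(\pi(p)))$.

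For (ii), I will prove the general statement: if $\pi\colon Y\to X$ is a submersion and $A,B\subseteq X$ are submanifolds with $A\pitchfork_X B$, then $\pi^{-1}(A)\pitchfork_Y\pi^{-1}(B)$. Fix $y\in\pi^{-1}(A)\cap\pi^{-1}(B)$ and $v\in T_y Y$; by transversality downstairs, decompose $d\pi(v)=a+b$ with $a\in T_{\pi(y)}A$ and $b\in T_{\pi(y)}B$. Since $d\pi|_y$ is surjective, pick $\tilde{a},\tilde{b}\in T_y Y$ with $d\pi(\tilde{a})=a$, $d\pi(\tilde{b})=b$; then $\tilde{a}\in(d\pi|_y)^{-1}(T_{\pi(y)}A)=T_y\pi^{-1}(A)$ and similarly $\tilde{b}\in T_y\pi^{-1}(B)$. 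The residual vector $v-\tilde{a}-\tilde{b}$ lies in $\ker d\pi|_y$, which is tangent to the fiber through $y$; since this fiber lies inside both $\pi^{-1}(A)$ and $\pi^{-1}(B)$, the residual sits in (say) $T_y\pi^{-1}(A)$, giving $T_y\pi^{-1}(A)+T_y\pi^{-1}(B)=T_y Y$. Applying this with $A=\des_{\overline{f},\overline{g}'}(\pi(p))$ and $B=\asc_{\overline{f},\overline{g}'}(\pi(q))$, together with (i), produces the claim.

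I do not anticipate any genuine obstacle: both pieces are formal consequences of the Riemannian submersion structure and elementary linear algebra of transversality. The one subtlety worth flagging is that the assertion is phrased inside $M_{(H)}$ and uses the preimage $\pi^{-1}(\asc_{\overline{f},\overline{g}'}(\pi(q)))$ rather than a genuine ascending manifold of $G\cdot q$ in $M$ — this is precisely what allows the argument to bypass the fact that $q$ need not lie in $M_{(H)}$, deferring the transfer of transversality from $M_{(H)}$ back to $M$ (which will rely on stability at $G\cdot q$) to Step~3.
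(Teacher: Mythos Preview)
Your proposal is correct and follows essentially the same approach as the paper. The paper's proof also uses the inclusion $\pi^{-1}(\des_{\overline{f},\overline{g}'}(\pi(p)))\subseteq\des_{\tilde{f},\tilde{g}'}(G\cdot p)$ together with the vertical/horizontal splitting to lift transversality through the submersion; you simply package the linear-algebra step as the general lemma ``submersions pull back transversality'' and strengthen the inclusion to an equality (which is valid, and which the paper in fact uses implicitly when it applies the downstairs transversality at $\pi(x)$ for an arbitrary $x$ in the upstairs intersection). One small remark: your invocation of Claim~\ref{claim:invariant} is unnecessary here, since you are already working intrinsically with $(\tilde f,\tilde g')$ on $M_{(H)}$.
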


\begin{proof}
       If $y\in \des_{\overline{f},\overline{g}'}(\pi(p))$ and $x\in \pi^{-1}(y)$, we know $\pi(\phi_t(y))$ is a negative gradient flow line in $X_{(H)}$ passing through $y$. It follows that $x\in \des_{\tilde{f},\tilde{g}'}(G\cdot p)$ since $\lim_{t\rightarrow -\infty}\pi(\phi_t(y))=\pi(p)$.
       Therefore, we see that
       \[\pi^{-1}(\des_{\overline{f},\overline{g}'}(\pi(p)))\subseteq\des_{\tilde{f},\tilde{g}'}(G\cdot p),\] which particularly implies that \[(T_xM_{(H)})^{\rm{ver}}+\pi^*T_{y}\des_{\overline{f},\overline{g}'}(\pi(p))\subseteq T_x\des_{\tilde{f},\tilde{g}'}(G\cdot p).\] Now, assume $x\in \des_{\tilde{f},\tilde{g}'}(G\cdot p)\cap\pi^{-1}(\asc_{\overline{f},\overline{g}'}(\pi(q)))$. By the transversality assumption in $X_{(H)}$,
       \begin{align*}
           T_xM_{(H)}&=(T_xM_{(H)})^{\rm{ver}}+ \pi^*T_{\pi(x)}X_{(H)}\\
           &=(T_xM_{(H)})^{\rm{ver}}+\pi^*T_{\pi(x)}\des_{\overline{f},\overline{g}'}(\pi(p))+\pi^*T_{\pi(x)}\asc_{\overline{f},\overline{g}'}(\pi(q))\\
           &\subseteq T_x\des_{\tilde{f},\tilde{g}'}(G\cdot p)+T_x\pi^{-1}(\asc_{\overline{f},\overline{g}'}(\pi(q))).
       \end{align*}
\end{proof}

\medskip
 \noindent\textbf{Step 2}
 
Now we aim to perturb the metric $\overline{g}$ near the point $\pi(p)$ on the quotient manifold $X_{(H)}$ into a generic metric $\overline{g}'$ such that the pair $(\overline{f},\overline{g}')$ satisfies the transversality assumption in Claim \ref{claim:liftfromXHtoMH}. Note that if $\overline g'$ differs from $\overline g$ only within a compact neighborhood of $\pi(p)$, then its flow $\phi_t$ will exist for all time. Indeed, by Claim~\ref{claim:invariant}, the gradient flow of $(\tilde f,\tilde g)$ on $M_{(H)}$ exists for all time. Projecting these flow lines via $\pi$ then 
implies that the gradient flow of $(\overline f,\overline g)$ on $X_{(H)}$ also exists for all time. 
 
Since there is no group action on $X_{(H)}$, we will follow the standard Sard-Smale argument (see, for example, \cite[Proposition~5.8]{hutchings2002lecture} and \cite[Theorem~3.3]{salamonMorse}) to achieve the transversality in $X_{(H)}$.

\begin{claim}\label{claim:perturbationMH}
   For a generic metric $\overline{g}'$ that coincides with $\overline{g}$ outside a compact neighborhood of $\pi(p)$, we have 
   \[\des_{\overline{f},\overline{g}'}(\pi(p))\pitchfork_{X_{(H)}}\asc_{\overline{f},\overline{g}'}(\pi(q)).\]

\end{claim}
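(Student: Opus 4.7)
The plan is to apply the standard Sard--Smale transversality argument, following the template of \cite[Proposition~5.8]{hutchings2002lecture} and \cite[Theorem~3.3]{salamonMorse}. Fix a precompact open neighborhood $U\subset X_{(H)}$ of $\pi(p)$ whose closure is disjoint from every other critical point of $\overline{f}$, and let $\mathcal{G}^k_U$ denote the Banach manifold of $C^k$ Riemannian metrics on $X_{(H)}$ that agree with $\overline{g}$ outside $U$. By the discussion preceding the claim, the negative gradient flow of $(\overline{f},\overline{g}')$ exists for all time for every $\overline{g}'\in\mathcal{G}^k_U$.

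Introduce the universal moduli space
\[
\mathcal{W} \;=\; \bigl\{(x,\overline{g}')\in U\times\mathcal{G}^k_U \;:\; x\in\des_{\overline{f},\overline{g}'}(\pi(p))\cap\asc_{\overline{f},\overline{g}'}(\pi(q))\bigr\}.
\]
Restricting $x$ to $U$ costs nothing, since by Claim~\ref{claim:transverse} transversality of $\des$ and $\asc$ at one point of a flow line is equivalent to transversality at any other point of that line; any intersection point can therefore be slid backward along the flow into $U$. Shrinking $U$ if necessary, the local unstable manifold theorem with parameters yields a smooth dependence $\overline{g}'\mapsto\des_{\overline{f},\overline{g}'}(\pi(p))\cap U$, so that $\mathcal{W}$ is locally cut out by a smooth equation in $(x,\overline{g}')$. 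The task reduces to verifying that the linearization of this equation is surjective at each $(x,\overline{g}')\in\mathcal{W}$.

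The surjectivity computation is the technical heart of the argument and the step I expect to be the main obstacle. The idea is that an arbitrary compactly supported symmetric $2$-tensor $\delta\overline{g}$ whose support lies in a small neighborhood of $x$ changes the gradient vector field of $\overline{f}$ at $x$ by an arbitrary tangent vector, and integrating the linearized flow moves $\des_{\overline{f},\overline{g}'}(\pi(p))$ at $x$ in any direction of $T_xX_{(H)}$ not already tangent to it. This is enough modulo $T_x\asc_{\overline{f},\overline{g}'}(\pi(q))$ to establish surjectivity. A further subtlety is that when $H'\supsetneq H$ the point $\pi(q)$ is not itself a critical point of $\overline{f}$ on $X_{(H)}$, and $\asc_{\overline{f},\overline{g}'}(\pi(q))$ is defined only via lifts to $M_{(H)}$; however, at any interior point $x\in U$ this set is still an injectively immersed smooth submanifold---it is the image under $\pi$ of the smooth flow-invariant submanifold $\asc_{\tilde{f},\tilde{g}'}(G\cdot q)\cap M_{(H)}$---so the local linearization analysis near $x$ proceeds exactly as in the classical Sard--Smale argument for Morse functions.

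Once surjectivity is established, $\mathcal{W}$ is a Banach submanifold and the projection $\Pi:\mathcal{W}\to\mathcal{G}^k_U$ is a Fredholm map. Sard--Smale then produces a Baire (hence dense) subset of regular values $\overline{g}'\in\mathcal{G}^k_U$, and for each such $\overline{g}'$ the fiber $\mathcal{W}_{\overline{g}'}$ is a smooth manifold of the expected dimension. This is precisely the desired transverse intersection $\des_{\overline{f},\overline{g}'}(\pi(p))\pitchfork_{X_{(H)}}\asc_{\overline{f},\overline{g}'}(\pi(q))$ inside $U$; by the sliding argument above, transversality then holds at all intersection points in $X_{(H)}$.
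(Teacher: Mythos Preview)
Your approach is a valid alternative to the paper's and follows the same Sard--Smale template; both cite \cite{hutchings2002lecture} and \cite{salamonMorse}. The paper packages the argument via a Banach bundle over a path space $\mathcal{B}\times\mathcal{G}$ with section $\mathcal{L}=\tfrac{d}{ds}+\mathrm{grad}_{\overline f,h}$, whereas you work with a finite-dimensional universal moduli space of intersection points in $U\times\mathcal{G}^k_U$. Both routes are standard and, when carried out carefully, equivalent.

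The substantive difference lies in how the two arguments handle the $+\infty$ asymptotic when $H'\supsetneq H$, so that $G\cdot q\not\subset M_{(H)}$ and the target point ``$\pi(q)$'' does not lie in $X_{(H)}$. The paper makes this the central computation: it establishes that the linearization $D_u$ is Fredholm by explicitly verifying that the $t\to+\infty$ asymptotic operator is invertible self-adjoint, via a lift $v(t)$ into a local slice $\mathbb{D}_q$ and the non-degeneracy of $\Hess_f(q)$ on $T_q\mathbb{D}_q$ coming from the $G$-Morse-Bott hypothesis. Your corresponding step---the assertion that $\asc_{\overline f,\overline g'}(\pi(q))$ is an injectively immersed smooth submanifold, written as the image under $\pi$ of $\asc_{\tilde f,\tilde g'}(G\cdot q)\cap M_{(H)}$---is precisely where the work is, and it is not justified. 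The expression $\asc_{\tilde f,\tilde g'}(G\cdot q)$ is ill-formed as written, since $(\tilde f,\tilde g')$ lives on $M_{(H)}$ while $G\cdot q$ need not. Even after reinterpreting it as $\asc_{f,g'}(G\cdot q)\cap M_{(H)}$ for some extension $g'$ to $M$, you must still show this intersection is a submanifold of the expected dimension and varies smoothly with the metric; that reduces to exactly the asymptotic Hessian analysis the paper carries out. Without it you can neither compute the Fredholm index of $\Pi$ nor conclude that $\mathcal{W}$ is a Banach manifold. This is the main gap in your outline; the remainder (sliding via Claim~\ref{claim:transverse}, the surjectivity argument using local metric variations, and the Sard--Smale conclusion) is sound and matches the paper's reasoning.
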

\begin{proof}

Take the Banach manifold $\mathcal{B}\times \mathcal{G}$ where
  \begin{gather*}
       \mathcal{B}:=\{u\in W^{1,2}(\mathbb{R},X_{(H)})\,|\,\lim_{t\rightarrow-\infty}u(t)=\pi(p),\lim_{t\rightarrow+\infty}d_g(\pi^{-1}(u(t)),G\cdot q)=0\},\\
    \mathcal{G}:=\{C^k\text{-metrics on }X_{(H)}\},
   \end{gather*}
   and the Banach bundle $\mathcal{E}\rightarrow \mathcal{B}\times \mathcal{G}$ with fiber $\mathcal{E}_{(u,h)}:=L^2(u^*TX_{(H)})$. Consider the section $\mathcal{L}$ of $\mathcal{E}$ defined by \[\mathcal{L}=\frac{d}{ds}+\text{grad}_{\overline{f},h}:\mathcal{B}\times\mathcal{G}\rightarrow\mathcal{E},\]where $\text{grad}_{\overline{f},h}$ is the gradient vector field of $\overline{f}$ on $X_{(H)}$ with respect to the metric $h\in\mathcal{G}$. For any $(u,h)\in\mathcal{L}^{-1}(0)$, there is the linearized operator
   \[D_{(u,h)}:L^2(u^*TX_{(H)})\oplus T_h\mathcal{G}\rightarrow L^2(u^*TX_{(H)})\] defined by differentiating the section map $\mathcal{L}$ at $(u,h)$ followed with the  projection into the fiber direction $\mathcal{E}_{(u,h)}$.  
   
   Now, in order to apply the Sard-Smale Theorem (\cite[Theorem 5.4]{hutchings2002lecture}), we need to consider the restriction operator $D_u$ of $D_{(u,h)}$ to the $L^2(u^*TX_{(H)})$ component defined by
   \[D_{u}\xi=\nabla_{u'(t)}\xi+A_{t}\xi,\] where 
   \begin{itemize}
       \item $\nabla_{u'(t)}$ denotes the covariant derivative taken with respect to the Levi-Civita connection of a fixed background metric;
       \item $A_{t}$ is the Hessian operator of $\overline{f}$ at $u(t)$ viewed as a $(1,1)$-tensor using the metric $h$.
   \end{itemize} To see $D_u$ is a Fredholm operator, recall that by \cite[Theorem 2.1]{RSspecflow} or \cite[Principle 5.6]{hutchings2002lecture} , it suffices to show that $\lim_{t\rightarrow\pm\infty} A_t$ exist and are invertible self-adjoint operators. Similar to the usual Morse setup, the negative asymptotic limit of $A_t$ is given by the Hessian of $\overline{f}$ at the critical point $\pi(t)$. For the positive asymptotic limit, we can choose a lifting path $v:\mathbb{R}\rightarrow M_{(H)}$ such that $\pi(v(t))=u(t)$ and $v(t)$ lies in a local slice $\mathbb{D}_q$ of $G\cdot q$ at $q$ when $t\rightarrow+\infty$. 
  For large $t$, if $\{e_i(t)\}$ is a frame along $u(t)$ which trivializes $u^*TX_{(H)}$, there will be a frame $\{l_i(t)\}$ tangent to $\mathbb{D}_q$ such that $\pi_*(l_i(t))=e_i(t)$. Therefore, $\lim_{t\rightarrow+\infty} A_t(e_i(t))$ can be computed by the Hessian operator of $f$ at $q$ and the limiting vectors $\lim_{t\rightarrow+\infty}l_i(t)\in T_q\mathbb{D}_q$. Since the $G$-Morse-Bott assumption implies that $\Hess_f(q)$ is non-degenerate at $T_q\mathbb{D}_q$, we see that the positive asymptotic limit of $A_t$ is also an invertible self-adjoint operator.  
  
  Therefore, we can recover the same setup as in the standard argument for obtaining Morse-Smale metrics on closed manifolds. A straightforward computation in local coordinates shows that the space of variations in $T_h\mathcal{G}$ is large enough to render $D_{(u,h)}$ surjective. Moreover, these variations can be chosen to be supported in an arbitrarily small neighborhood of $\pi(p)$ (\cite[Proposition 5.8, Step 2(a)]{hutchings2002lecture}). The Sard-Smale theorem (\cite[Theorem 5.4]{hutchings2002lecture}) then guarantees that for generic metrics $\overline{g}'\in \mathcal{G}$ on $X_{(H)}$, $D_u$ is surjective, which makes $\des_{\overline{f},\overline{g}'}(\pi(p))$ and $\asc_{\overline{f},\overline{g}'}(\pi(q))$ intersect transversely in $X_{(H)}$. 

\end{proof}
 
\medskip
 \noindent\textbf{Step 3}

By the previous two steps, we may assume that the metric $\tilde{g}'$ on $M_{(H)}$ satisfies the transversality condition $\des_{\tilde{f},\tilde{g}'}(G\cdot p)\pitchfork_{M_{(H)}}\pi^{-1}(\asc_{\overline{f},\overline{g}'}(\pi(q)))$. We now extend this perturbed metric $\tilde{g}'$ on $M_{(H)}$ to a $G$-invariant metric $g'$ on $M$ which is $C^k$-close to the original metric $g$.

Let $K\subseteq X_{(H)}$ be a neighborhood of $\pi(p)$ that contains the support of the perturbation symmetric $(0,2)$-tensor $\overline{\delta}:=\overline{g}'-\overline{g}$ in Claim \ref{claim:perturbationMH}. Then, the corresponding symmetric $(0,2)$-tensor $\tilde{\delta}:=\tilde{g}'-\tilde{g}$ on $M_{(H)}$ is supported in $\pi^{-1}(K)\subseteq M_{(H)}$. By shrinking $K$ if necessary, we may identify the normal bundle $N_{\pi^{-1}(K)}$ of the embedded submanifold $\pi^{-1}(K)\subseteq M$ with a $G$-invaiant tubular neighborhood $pr:U(\pi^{-1}(K))\rightarrow \pi^{-1}(K)$ defined by the exponential map with respect to the metric $g$. On $U(\pi^{-1}(K))$, we use the $g$-orthogonal splitting \[TU(\pi^{-1}(K))\cong pr^*(T\pi^{-1}(K))\oplus pr^*N_{\pi^{-1}(K)}\] to define a symmetric $(0,2)$-tensor $\delta$ on $M$ supported in $U(\pi^{-1}(K))$ by declaring that 
\begin{itemize}
    \item $\delta$ equals $pr^*\tilde{\delta}$ along the $pr^*(T\pi^{-1}(K))$-direction;
    \item $\delta(v,\textrm{-})=0$ if $v$ belongs to the normal direction $pr^*N_{\pi^{-1}(K)}$.  
\end{itemize}
This $\delta$ is $G$-invariant since the projection map $pr$, the splitting and $\tilde{\delta}$ are all $G$-invariant. Fix a $G$-invariant cutoff function $\chi$ supported in $U(\pi^{-1}(K))$ with $\chi\equiv 1$ on a smaller neighborhood of $\pi^{-1}(K)\subseteq U(\pi^{-1}(K))$. Now, we can define
\[g':=g+\chi \delta\] to be a $G$-invariant symmetric $(0,2)$-tensor on $M$ which satisfies that the restriction $g'|_{M_{(H)}}$ coincides with $\tilde{g}'$ on $M_{(H)}$. Note that $g'$ can be made arbitrarily $C^k$-close to $g$ if we fix the function $\chi$ and choose $\delta'$ to have sufficiently small $C^k$-norm. Since small $C^k$-perturbation preserves the positive definiteness, $g'$ defines a $G$-invariant metric on $M$ that extends the perturbation on $M_{(H)}$. 

The last ingredient for the proof of Theorem \ref{thm:Morse-Bott-Smale-is-generic} is the following claim analogous to \cite[Lemma 7.5]{bao2024morse}, which upgrades transversality in $M_{(H)}$ to transversality in $M$. This is the place where we make essential use of the stability condition at the critical orbit $G\cdot q$.

\begin{claim}\label{claim:extendtransversality}
  Let $g'$ be the metric on $M$ obtained from the metric $\tilde{g}'$ on $M_{(H)}$ by the above description. If \[\des_{\tilde{f},\tilde{g}'}(G\cdot p)\pitchfork_{M_{(H)}}\pi^{-1}(\asc_{\overline{f},\overline{g}'}(\pi(q))),\]
    then we also have \[\des_{f,g'}(G\cdot p)\pitchfork_{M}\asc_{f,g'}(G\cdot q).\]
\end{claim}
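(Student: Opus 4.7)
The plan is to reduce the transversality in $M$ to the hypothesis in $M_{(H)}$, and then use stability at $G\cdot q$ to cover the normal directions to $M_{(H)}$ inside $M$.

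For any $x\in\des_{f,g'}(G\cdot p)\cap\asc_{f,g'}(G\cdot q)$, the first observation is that $x\in M_{(H)}$ by Claim~\ref{claim:invariant}, and that since $g'$ is $G$-invariant its gradient flow preserves stabilizer types along any trajectory, forcing $H\subseteq H'$ up to conjugation. Moreover the flows of $(\tilde f,\tilde g')$ on $M_{(H)}$ and of $(f,g')$ restricted to $M_{(H)}$ coincide, so $\pi^{-1}(\asc_{\overline f,\overline g'}(\pi(q)))\subseteq \asc_{f,g'}(G\cdot q)$; combined with the hypothesis this gives
\[
T_xM_{(H)}\;\subseteq\; T_x\des_{f,g'}(G\cdot p)+T_x\asc_{f,g'}(G\cdot q).
\]
Together with the obvious $T_x\des_{f,g'}(G\cdot p)\subseteq T_xM_{(H)}$, the desired transversality in $M$ reduces to proving
\[
(\star)\quad T_xM \;=\; T_xM_{(H)}+T_x\asc_{f,g'}(G\cdot q).
\]

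To prove $(\star)$, I would use flow-invariance of both $\des_{f,g'}(G\cdot p)$ and $\asc_{f,g'}(G\cdot q)$ (exactly as in Claim~\ref{claim:transverse}) to move $x$ arbitrarily close to $G\cdot q$ along its flow line, and then $G$-translate so that $x=\exp_q(v)$ with $v\in N_q^H\cap N_q^+$ small; here $v\in N_q^H$ because $x\in M_H$ along the flow, and $v\in N_q^+$ by the stable manifold theorem for the critical orbit $G\cdot q$. Using the local orbit map $\mu(g,w)=g\cdot\exp_q(w)\colon G\times N_q\to M$, whose differential $d\mu_{(e,v)}\colon\mathfrak g\oplus N_q\to T_xM$ is surjective, the stable manifold theorem gives
\[
T_x\asc_{f,g'}(G\cdot q)\;\supseteq\; d\mu_{(e,v)}(\mathfrak g\oplus N_q^+)
\]
since $\asc_{f,g'}(G\cdot q)$ locally contains $G\cdot\exp_q(N_q^+)$, while $\exp_q(N_q^H)\subseteq M_H\subseteq M_{(H)}$ yields
\[
T_xM_{(H)}\;\supseteq\; d\mu_{(e,v)}(\mathfrak g\oplus N_q^H).
\]

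The crucial step, where stability enters, is the final summation. Because $H\subseteq H'$, the $H'$-fixed subspace $N_q'$ is automatically $H$-fixed, so $N_q'\subseteq N_q^H$; Lemma~\ref{lem:stabledefinition} applied to the stability of $f$ at $G\cdot q$ gives $N_q''\subseteq N_q^+$. Adding these inclusions:
\[
N_q^H+N_q^+ \;\supseteq\; N_q'+N_q'' \;=\; N_q,
\]
whence $T_xM_{(H)}+T_x\asc_{f,g'}(G\cdot q)\supseteq d\mu_{(e,v)}(\mathfrak g\oplus N_q)=T_xM$, which is $(\star)$. I expect the main obstacle to be precisely this last step: without stability, the nontrivial part $N_q''$ of the slice at $q$ could contain negative Hessian directions that lie outside the ascending manifold, yet which correspond exactly to the directions normal to $M_{(H)}$ in $M$ that the transversality hypothesis in $M_{(H)}$ cannot see. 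Stability rules this out and upgrades weak MBS transversality in $M_{(H)}$ to full MBS transversality in $M$.
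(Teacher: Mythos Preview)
Your proof is correct and follows essentially the same route as the paper: both reduce to $(\star)$, move $x$ close to $G\cdot q$, and use stability at $q$ in the form $N_q''\subseteq N_q^+$ (equivalently $N_q^-\subseteq N_q^{H'}\subseteq N_q^{H}$) to conclude that $N_q^{H}+N_q^+=N_q$ and hence that $M_{(H)}$ together with $\asc_{f,g'}(G\cdot q)$ spans $T_xM$. The only cosmetic difference is that the paper verifies the spanning condition at $q$ (between $G\times_{H'}N_q^{H''}$ and the ascending manifold) and then passes to nearby $x$ by continuity, whereas you work directly at $x$ via the orbit map $\mu$; your phrasing ``$\asc$ locally contains $G\cdot\exp_q(N_q^+)$'' should strictly be ``is tangent to'' and ``$\exp_q(N_q^H)\subseteq M_H$'' should be ``$\subseteq M^H$, which agrees with $M_H$ near $x$'', but neither affects the argument.
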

\begin{proof}
By Claim \ref{claim:invariant}, we only need to check the transversality between $\des_{\tilde{f},\tilde{g}'}(G\cdot p)$ and $\asc_{f,g'}(G\cdot q)$ at the point \[x\in \des_{f,g'}(G\cdot p)\cap\asc_{f,g'}(G\cdot q)\cap M_{(H)}=\des_{\tilde{f},\tilde{g}'}(G\cdot p)\cap\pi^{-1}(\asc_{\overline{f},\overline{g}'}(\pi(q)))\subseteq M_{(H)}.\]  
 By the transversality assumption in $M_{(H)}$, it suffices to show 
\[T_xM_{(H)}+T_x\asc_{f,g'}(G\cdot q)=T_xM.\]By Claim \ref{claim:transverse}, we can further assume that $x$ is inside a tubular neighborhood of $G\cdot q$ identified with 
\[U(G\cdot q)\cong G\times_{H'} N_q,\]where $H'$ is the stabilizer group at $q$ and $N_q$ is the normal space at $q$. Without loss of generality, let us suppose $q=[e,0]$ and $x=[e,v]\in G\times _{H'}N_q$. Then the assumption that $x\in M_{(H)}$ implies that $H'$ contains a subgroup $H''$ conjugate to $H$ such that $v$ is in the linear subspace $N_q^{H''}\subseteq N_q$ fixed by $H''$. Note that $N_q^{H''}\supseteq N_q^{H'}$. Consider the local intersection between $G\times _{H'} N_q^{H'}$ and $\asc_{f,g'}(G\cdot q)$. By the stability assumption at $G\cdot q$, we have 
\begin{align*}
    T_qM&=N_q^{H''}+T_q(\asc_{f,g'}(G\cdot q))\\
    &\supseteq N_q^{H'}+T_q(\asc_{f,g'}(G\cdot q))\\
    &=N_q^{H'}+T_q(G\cdot q)+T_q(\asc_{f,g'}(G\cdot q))\\
    &=T_{[e,0]}(G\times _{H'} N_q^{H'})+T_q(\asc_{f,g'}(G\cdot q)).
\end{align*}
Since $M_{(H)}\cap (G\times_{H'} N^{H'}_{q})$ is obtained from $G\times_{H'} N^{H'}_{q}$ by removing lower strata corresponding to larger stabilizer groups, it is a dense open submanifold of $G\times_{H'} N^{H'}_{q}$. Therefore, when $x$ is sufficiently close to $q$, we will have our desired transversality between $M_{(H)}$ and $\asc_{f,g'}(G\cdot q)$ at $x$ by continuity.

\end{proof}

Finally, note that there are only finitely many critical orbits. To complete the proof of Theorem \ref{thm:Morse-Bott-Smale-is-generic}, we just need to follow the above scheme to perform generic perturbation of the metric near each critical orbit $G\cdot m$ inside $M_{(\text{stab}(m))}$ and then extend the perturbation to $M$ in a $G$-invariant manner.

\end{proof}

    \begin{remark}
The above proof of equivariant transversality for Lie group actions differs from that of \cite{bao2024morse} for finite group actions in that we work with the stratum $M_{(H)}$ and its quotient $X_{(H)}$, rather than the submanifold of $H$-fixed points
\[
M^{H}=\{x\in M \mid H\subseteq \operatorname{stab}(x)\}.
\]
When $G$ is finite, the critical orbit $G\cdot p$ is a discrete subset of $M^{H}$, so that $p$ admits a sufficiently small neighborhood $V_{p}\subseteq M^H$ that is disjoint from the other points of $G\cdot p$. In that setting, the generic perturbation in \cite{bao2024morse} is carried out within $V_{p}$. In the Lie group setting, the perturbation is instead performed inside a small neighborhood $K$ of $\pi(p)\in X_{(H)}$.
\end{remark}

\section{Morse-theoretic equivariant cohomology}\label{section:cohomology}
\subsection{Moduli spaces of Morse flow lines and their orientability}

In \cite[Section~3.2]{AustinBraam}, Austin--Braam introduced the following assumptions in order to define a Morse-Bott cochain complex associated to a pair $(f,g)$.

\begin{assumption}\label{assump:AB}
\hfill
\begin{enumerate}
    \item The function $f$ is \emph{weakly self-indexing}. That is, for any connected components $S,S'\subseteq\crit(f)$, there are no negative gradient flow lines from $S$ to $S'$ if $\ind(S)\le \ind(S')$.
    \item For any connected components $S,S'\subseteq\crit(f)$ and any $p\in S$, the descending manifold $\des(p)$ intersects the ascending manifold $\asc(S')$ transversely.
    \item Each connected component $S\subseteq\crit(f)$, as well as its negative normal bundle $\mathcal{N}^-(S)$, is orientable.
\end{enumerate}
\end{assumption}

As pointed out by Banyaga--Hurtubise \cite[Lemma~3.6]{BanyagaHurtubise}, Condition~$(2)$ already implies Condition~$(1)$.

If we restrict attention to \emph{stable} $G$-Morse-Bott functions, then Condition~$(2)$ holds for a generic $G$-invariant metric by Theorem~\ref{thm:Morse-Bott-Smale-is-generic}.
In contrast, without the $G$-Morse-Bott and stability assumption, Condition~$(2)$ generally fails.
Indeed, if we do not restrict to $G$-Morse-Bott functions, Example~\ref{example:hornedtorus} violates Condition~$(2)$ for every $G$-invariant metric.
Even within the class of $G$-Morse-Bott functions, Example~\ref{example:mapping torus} shows that Condition~$(2)$ may still fail for all $G$-invariant metrics.

In the presence of Lie group symmetries, the critical components $S$ are homogeneous spaces, which may themselves be non-orientable (for instance $\mathbb{RP}^2 = SO(3)/O(2)$).
Nevertheless, we will see that orientability of the critical submanifolds $S$ is not required for our construction.
Moreover, the following lemma shows that the stability condition in Definition~\ref{definition:stable G-Morse-Bott} guarantees the orientability of the negative normal bundles appearing in Condition~$(3)$.

\begin{lemma}\label{lem:orientable}
Let $G$ be a compact Lie group acting on $M$.
If $f$ is a stable $G$-Morse-Bott function, then for every critical orbit $S$, the negative normal bundle $\mathcal{N}^-(S)$ with respect to any $G$-invariant Riemannian metric is a trivial bundle.
\end{lemma}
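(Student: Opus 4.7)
The plan is to reduce the statement directly to the alternative characterization of stability provided by Lemma~\ref{lem:stabledefinition}. Fix any critical orbit $S = G\cdot p$ with stabilizer $H \subseteq G$, and fix a $G$-invariant Riemannian metric $g$ on $M$. Using the metric $g$ I would identify the normal bundle $\mathcal{N}(S)$ with $G\times_{H} N_p$, where $N_p = T_p(G\cdot p)^{\perp}$ carries the induced slice representation of $H$. Since $f$ is $G$-Morse-Bott, the Hessian $\mathrm{Hess}_f(p)$ is non-degenerate on $N_p$ and we have the orthogonal eigenspace decomposition $N_p = N_p^{+}\oplus N_p^{-}$ (with $N_p^{0}=0$). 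This decomposition is $H$-invariant because the pair $(f,g)$ is $G$-invariant, so
\[
\mathcal{N}^{-}(S) \;\cong\; G\times_{H} N_p^{-}.
\]

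Now I invoke the stability hypothesis. By the equivalence (1)$\Leftrightarrow$(2) in Lemma~\ref{lem:stabledefinition}, stability of $f$ at $S$ combined with the chosen $G$-invariant metric $g$ yields the inclusion $N_p^{-} \subseteq N_p'$, where $N_p'$ is precisely the subspace of $H$-fixed vectors in $N_p$. In particular, the slice representation of $H$ restricts to the trivial representation on $N_p^{-}$.

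Since $H$ acts trivially on the fiber $N_p^{-}$, the associated bundle $G\times_{H} N_p^{-}$ splits as a product:
\[
\mathcal{N}^{-}(S) \;\cong\; G\times_{H} N_p^{-} \;\cong\; (G/H)\times N_p^{-} \;\cong\; S \times \mathbb{R}^{\mathrm{ind}(S)}.
\]
This exhibits $\mathcal{N}^{-}(S)$ as a trivial vector bundle over $S$, completing the proof. There is no real obstacle here beyond correctly packaging Lemma~\ref{lem:stabledefinition}; the only subtlety worth flagging is that although the eigenspace splitting $N_p = N_p^{+}\oplus N_p^{-}$ depends on the choice of $G$-invariant metric, the conclusion is metric-independent because the equivalent formulations of stability ensure the inclusion $N_p^{-} \subseteq N_p'$ for \emph{every} such metric.
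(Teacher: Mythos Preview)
Your proof is correct and follows essentially the same approach as the paper: invoke Lemma~\ref{lem:stabledefinition} to conclude $N_p^{-}\subseteq N_p'$, then observe that the trivial $H$-action on $N_p^{-}$ forces $\mathcal{N}^{-}(S)\cong G\times_H N_p^{-}\cong G/H\times N_p^{-}$ to be a trivial bundle. Your write-up is slightly more detailed (explicitly noting $H$-invariance of the eigenspace splitting and the metric-independence), but the argument is the same.
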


\begin{proof}
    Choose any $p\in S$ and let $H=\text{stab}(p)$. The induced representation of $H$ on the normal space $N_p$ at $p$ decomposes as the sum of the trivial part $N_p'$ and the non-trivial part $N_p''$. By Lemma \ref{lem:stabledefinition}, the stability assumption of $f$ implies that the negative eigenspace $N_p^-$ of $\text{Hess}_f(p)$ viewed as a self-adjoint operator for any $G$-invariant metric is contained in $N_p'$. This means $N_p^-$ is fixed by $H$. It follows that the negative normal bundle $\mathcal{N}^-(S)$ can be identified with \[G\times_H N_p^-\cong G/H\times N_p^-\cong S\times N_p^-,\] which is a trivial bundle over $S$.
\end{proof}

For any two connected components $S,S'\subseteq\crit(f)$, let $\mathcal{M}(S,S')$ denote the moduli space of unparametrised negative gradient flow lines from $S$ to $S'$, defined by
\[
\mathcal{M}(S,S') = (\mathscr{D}(S)\cap \mathscr{A}(S'))/\R,
\]
where $\R$ acts by translation along the flow direction.
If the pair $(f,g)$ is weakly Morse-Bott-Smale, then $\mathcal{M}(S,S')$ is a smooth manifold of dimension
\[
\ind(S)-\ind(S')+\dim S-1.
\]

Similarly, for a critical point $p\in S$ and a critical submanifold $S'\subseteq\crit(f)$, we define the moduli space $\mathcal{M}(p,S')$ of unparametrised negative gradient flow lines from $p$ to $S'$ by
\[
\mathcal{M}(p,S') = (\mathscr{D}(p)\cap \mathscr{A}(S'))/\R,
\]
where again $\R$ acts by translation in the flow direction.
When the pair $(f,g)$ is Morse-Bott-Smale, $\mathcal{M}(p,S')$ is a smooth manifold of dimension
\[
\ind(p)-\ind(S')-1.
\]

By Lemma \ref{lem:orientable}, we can fix orientations of the negative normal bundles $\mathcal{N}^-(S)$ for all connected components $S\subseteq\crit(f)$, so that the orientations are $G$-invariant.
For each $p\in S$, the identification
\[
T_p\mathscr{D}(p)\cong \mathcal{N}^-(S)_p
\]
induces an orientation on the descending manifold $\mathscr{D}(p)$ at $p$.
The moduli spaces $\mathcal{M}(p,S')$ are oriented as follows.
For any $[\gamma]\in\mathcal{M}(p,S')$, the linearized flow along $\gamma$ induces an isomorphism of vector spaces
\[
T_p\mathscr{D}(p)\cong T_{[\gamma]}\mathcal{M}(p,S') \oplus \R\langle \p_s\rangle \oplus T_q\mathscr{D}(q),
\]
where $q=\gamma(+\infty)\in S'$ and $s$ denotes the flow parameter.
We orient $T_{[\gamma]}\mathcal{M}(p,S')$ so that this isomorphism is orientation-preserving, using the chosen $G$-invariant orientations of $\mathcal{N}^-(S)$ and $\mathcal{N}^-(S')$ to orient $T_p\mathscr{D}(p)$ and $T_q\mathscr{D}(q)$ respectively. Note that the orientations assigned in this way are also $G$-invariant: for any $g\in G$, the orientation on $T_{[\gamma]}\mathcal{M}(p,S')$ coincides with the orientation on $T_{[g\cdot \gamma]}\mathcal{M}(g\cdot p,S')$.

\subsection{Cochain complex for ordinary cohomology}

We briefly recall the construction of ordinary cohomology under the Lie group symmetry using equivariant Morse-Bott theory. We refer to \cite{zhuang2025analytictopologicalrealizationsinvariant} for more details. 
Let $f$ be a stable $G$-Morse-Bott function on a closed $G$-manifold $M$, and let $g$ be a $G$-invariant Riemannian metric such that the pair $(f,g)$ satisfies the Morse-Bott-Smale condition.  
For each $i$, let $S_i$ denote the union of components of $\crit(f)$ with index $i$.

The Morse-Bott cochain complex $(C,\p)$ is defined by
\[
C^p = \bigoplus_{i+j=p} \Omega^j(S_i)^G,
\]
where $\Omega^j(S_i)^G$ denotes the space of smooth $G$-invariant differential $j$-forms on $S_i$.

For a homogeneous element $\omega \in \Omega^j(S_i)^G$, the differential is defined as a sum
\[
\p = \sum_{r \ge 0} \p_r,
\]
with components specified as follows.

\begin{itemize}
\item For $r>0$,
\[
\p_r \omega
:= (-1)^j (e_-^{\,i+r,i})_*(e_+^{\,i+r,i})^*\omega
\in \Omega^{j-r+1}(S_{i+r})^G.
\]
Here
\begin{gather*}
    e_+^{\,i+r,i} : \mathcal{M}(S_{i+r},S_i) \to  S_i,\\
   e_-^{\,i+r,i} : \mathcal{M}(S_{i+r},S_i) \to S_{i+r},
\end{gather*}
denote the endpoint evaluation maps on the moduli space of gradient flow lines.  
The operator $(e_+^{\,i+r,i})^*$ denotes the pullback, while $(e_-^{\,i+r,i})_*$ denotes pushforward, which can be interpreted as integration along the fiber since the Morse-Bott-Smale condition guarantees that $e_-^{\,i+r,i}$ has the structure of a locally trivial fiber bundle.  
The sign and degree shift reflect the dimension of the fibers, and the pushforward uses the orientations fixed on the moduli spaces in the previous subsection.

\item For $r=0$,
\[
\p_0 \omega := d\omega \in \Omega^{j+1}(S_i)^G.
\]
\end{itemize}

\begin{lemma}\cite[Proposition 2.3]{zhuang2025analytictopologicalrealizationsinvariant}
$\p^2 = 0$.
\end{lemma}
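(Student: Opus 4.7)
The plan is to organize $\p^2$ by the total index shift. Writing $\p=\sum_{r\ge 0}\p_r$ and expanding $\p^2$ componentwise, for $\omega\in\Omega^j(S_i)^G$ the resulting forms live on $S_{i+k}$ for various $k\ge 0$, with
\[
(\p^2\omega)\big|_{S_{i+k}} \;=\; \sum_{\substack{r,s\ge 0 \\ r+s=k}} \p_s\p_r\omega.
\]
It therefore suffices to show that each such grouped sum vanishes. For $k=0$ this reduces to $\p_0^2=d^2=0$. For $k\ge 1$ I would split the grouped sum as
\[
\p_0\p_k+\p_k\p_0 \;+\; \sum_{\substack{r,s>0\\ r+s=k}}\p_s\p_r \;=\; d\p_k+\p_k d \;+\; \sum_{\substack{r,s>0\\ r+s=k}}\p_s\p_r,
\]
and show that the first two ``de Rham'' terms and the ``broken flow line'' terms cancel against each other after a careful application of Stokes' theorem.

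The key technical ingredient is the compactification $\overline{\mathcal{M}}(S_{i+k},S_i)$ as a manifold with corners whose codimension-$1$ boundary admits the identification
\[
\p\overline{\mathcal{M}}(S_{i+k},S_i) \;\cong\; \bigsqcup_{\substack{r,s>0\\ r+s=k}} \overline{\mathcal{M}}(S_{i+k},S_{i+s})\;{}_{e_-}\!\times_{e_+}\overline{\mathcal{M}}(S_{i+s},S_i),
\]
together with the continuous extension of the evaluation maps $e_\pm$ to this compactification. Once this is in place, I would apply the standard commutation/Stokes formula for integration along the fiber of the oriented map $e_-^{i+k,i}\colon \overline{\mathcal{M}}(S_{i+k},S_i)\to S_{i+k}$, which yields
\[
d\bigl[(e_-^{i+k,i})_*(e_+^{i+k,i})^*\omega\bigr] \;=\; \pm (e_-^{i+k,i})_*(e_+^{i+k,i})^* d\omega \;\pm\; \bigl(e_-^{i+k,i}|_\p\bigr)_*(e_+^{i+k,i})^*\omega.
\]
The first term on the right, combined with $\p_k d\omega$, accounts for $d\p_k+\p_k d$ up to a sign that matches the $(-1)^j$ convention in the definition of $\p_r$. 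The boundary contribution, factored through the fiber product above, is precisely $\sum_{r,s>0, r+s=k}\p_s\p_r\omega$, since pushforward along a fiber product of evaluation maps factors as composition of pushforwards (using $e_-$ on the outer moduli space being a submersion onto its image in the fiber product by Morse-Bott-Smale).

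The main obstacle is the equivariant Morse-Bott gluing theory needed to justify the corner structure of $\overline{\mathcal{M}}(S_{i+k},S_i)$ and the orientation-preserving identification of its boundary with the fiber products above. In the Morse setting this is classical, and its Morse-Bott extension appears for instance in \cite{BanyagaHurtubise} and \cite{zhuang2025analytictopologicalrealizationsinvariant}; the stability hypothesis combined with Lemma~\ref{lem:orientable} ensures that all negative normal bundles are trivial and that the canonical $G$-invariant orientations on $\mathcal{N}^-(S_i)$ propagate consistently to orientations on every moduli space $\mathcal{M}(p,S')$ and on each boundary stratum, so no local coefficient systems are needed. Once the corner structure and orientation compatibility are established, what remains is a bookkeeping exercise with the signs coming from the factor $(-1)^j$, from the parity of the fiber dimension of $e_-^{i+k,i}$ (which equals $k-1$), and from the convention for boundary orientations on manifolds with corners; all of these combine so that the de Rham and broken-trajectory contributions cancel, giving $\p^2=0$.
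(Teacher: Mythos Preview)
The paper does not supply its own proof of this lemma; it simply records the statement and cites \cite[Proposition~2.3]{zhuang2025analytictopologicalrealizationsinvariant} (the argument ultimately going back to Austin--Braam). Your outline is exactly the standard proof one finds in those references: decompose $\p^2$ by total index shift, use $d^2=0$ for the $k=0$ piece, and for $k\ge 1$ apply Stokes' theorem for integration along the fiber of $e_-^{i+k,i}$ on the compactified moduli space $\overline{\mathcal{M}}(S_{i+k},S_i)$, identifying the codimension-one boundary with the fiber products of shorter moduli spaces so that the boundary contribution reproduces $\sum_{r+s=k,\,r,s>0}\p_s\p_r$. So your approach matches what the paper is invoking.

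One small point of phrasing: where you write ``$e_-$ on the outer moduli space being a submersion onto its image in the fiber product by Morse-Bott-Smale,'' the precise statement you want is that the MBS condition makes each $e_-^{i+r,i}$ a locally trivial fiber bundle over $S_{i+r}$ (as the paper notes), so that fiber integration is defined and pushforward along the composite factors as the composition of pushforwards. With that clarification, and granting the gluing/corner structure and orientation compatibility you flag as the main analytic input, your sketch is correct.
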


\begin{theorem}\cite[Theorem 1.4]{zhuang2025analytictopologicalrealizationsinvariant}
The cohomology of $(C,\p)$ is isomorphic to the de Rham cohomology $H^*(M;\R)$.
\end{theorem}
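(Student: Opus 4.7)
The strategy is to construct a spectral sequence on $(C, \p)$ filtered by Morse index, compare it with the classical Morse--Bott spectral sequence on $M$, and conclude convergence to $H^*(M;\R)$. This parallels the non-equivariant Austin--Braam argument, with the compactness of $G$ enabling the passage to $G$-invariant forms.

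First, I would introduce the decreasing, $\p$-compatible filtration $F^n C := \bigoplus_{i \geq n} \Omega^*(S_i)^G$, which is bounded since $\crit(f)$ has only finitely many components. The $E_0$ page is $E_0^{p,q} = \Omega^q(S_p)^G$ with $d_0 = d$ (since $\p_r$ strictly raises the index $i$ by $r$ for each $r \geq 1$, while $\p_0 = d$ preserves it). Compactness of $G$ and the averaging map give $E_1^{p,q} \cong H^q(S_p;\R)^G$, which equals $H^q(S_p;\R)$ whenever $G$ acts trivially on $H^*(S_p)$ --- automatic when $G$ is connected, since $S_p = G/H_p$ is then a connected homogeneous space.

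Next, I would identify this spectral sequence with the classical Morse--Bott spectral sequence arising from the filtration of $M$ by closed sets $M^{\geq k} := \overline{\bigcup_{i \geq k} \des(S_i)}$. Its $E_1$ page is $H^*(M^{\geq p}, M^{\geq p+1}) \cong H^{*-\ind(S_p)}(S_p;\R)$ via the Thom isomorphism; this step is valid precisely because Lemma~\ref{lem:orientable} produces a $G$-invariant trivialization of $\mathcal{N}^-(S_p)$, giving a canonical orientation without any local coefficient system. Under these identifications, both $E_1$-differentials are Gysin-type maps, obtained by pullback along $e_+^{p+1,p}$ followed by pushforward along $e_-^{p+1,p}$ through $\mathcal{M}(S_{p+1}, S_p)$, so they coincide. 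Since the classical spectral sequence is well known to converge to $H^*(M;\R)$, so does ours, yielding $H^*(C,\p) \cong H^*(M;\R)$.

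The main obstacle is the identification of the differentials: verifying that the Austin--Braam pullback-pushforward agrees on cohomology with the classical attaching-map boundary requires a careful Stokes'-theorem analysis on the compactified moduli spaces $\overline{\mathcal{M}(S_i,S_j)}$, whose codimension-one boundary strata correspond to broken trajectories and govern the higher differentials $d_r$ for $r \geq 2$. Well-definedness of the pushforward in $(C,\p)$ rests on the MBS condition (so each $e_-^{i+r,i}$ is a submersion with compact fibers), together with the $G$-invariant orientations constructed in the preceding subsection using Lemma~\ref{lem:orientable}. An alternative and often cleaner route, adapting the non-equivariant approach of \cite{AustinBraam, BanyagaHurtubise}, would be to construct a direct quasi-isomorphism between $(C,\p)$ and $(\Omega^*(M),d)$ via integration over compactified ascending manifolds, and then argue --- using Lemma~\ref{lem:orientable} and the stability hypothesis --- that this quasi-isomorphism descends to $G$-invariants.
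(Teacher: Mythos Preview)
The paper does not give its own proof of this theorem: it is simply quoted as \cite[Theorem~1.4]{zhuang2025analytictopologicalrealizationsinvariant}, with no argument supplied in the text. There is therefore nothing in the paper to compare your proposal against.

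That said, your spectral-sequence strategy via the index filtration is the standard route and is essentially how Austin--Braam handle the non-equivariant case; the alternative you mention at the end---a direct quasi-isomorphism with $(\Omega^*(M),d)$ built from integration over compactified descending/ascending manifolds---is in fact closer in spirit to what the cited reference does. One genuine gap in your sketch: the passage from $H^*(\Omega^*(S_p)^G)$ to $H^*(S_p;\R)$ (and ultimately to $H^*(M;\R)$ rather than $H^*(M;\R)^G$) requires $G$ to act trivially on de~Rham cohomology. You invoke connectedness of $G$ for this, but the paper nowhere assumes $G$ is connected, so either that hypothesis is implicit in the cited theorem or your argument only yields $H^*(M;\R)^G$ in the disconnected case. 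Also, identifying the $E_1$-differentials of the two spectral sequences is the real work and cannot be dismissed with ``so they coincide''; the honest acknowledgment in your final paragraph is warranted, and a full proof would need the compactification-with-corners structure on $\overline{\mathcal{M}(S_i,S_j)}$ together with a Stokes-type argument, exactly as you indicate.
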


\begin{remark}
 In \cite{zhuang2025analytictopologicalrealizationsinvariant}, an additional 
complication in defining the cochain complex arises from considering $G$-invariant differential forms valued in 
the orientation bundle of $\des(S_i)$ over $S_i$. Under our stability assumption, this issue 
is avoided. Indeed, the locally trivial fiber bundle $
e_-^{\,i+r,i} \colon \mathcal{M}(S_{i+r},S_i) \to S_{i+r}$
is fiberwise orientable, since the orientations on $\mathcal{M}(p,S_i)$ are $G$-invariant for 
each $p\in S_{i+r}$. Consequently, integration along the fiber is well-defined 
using ordinary $G$-invariant differential forms.

\end{remark}

\subsection{Cochain complex for equivariant cohomology}
We keep the same setup and notation as in the previous subsection.
Following Austin-Braam \cite{AustinBraam}, we define the Morse-theoretic Cartan model $(C_G,\p_G)$ by
\[
C_G^p
= \bigoplus_{i+j+2k=p}
\bigl(\Omega^j(S_i)\otimes S^k(\g^*)\bigr)^G.
\]
Here $S^k(\g^*)$ denotes the $k$th symmetric power of the dual Lie algebra, and the superscript $G$ denotes the invariant subspace.  
The group $G$ acts on $\Omega^j(S_i)$ by pullback and on $S^k(\g^*)$ via the coadjoint action.

For a homogeneous element $\omega \otimes \phi \in \bigl(\Omega^j(S_i)\otimes S^k(\g^*)\bigr)^G$, the differential is defined by
\[
\p_G = \sum_{r \ge 0} (\p_G)_r,
\]
where the components are given as follows.

\begin{itemize}
\item For $r>0$,
\[
(\p_G)_r(\omega \otimes \phi)
:= (-1)^j (e_-^{\,i+r,i})_*\bigl((e_+^{\,i+r,i})^*\omega\bigr)\otimes\phi
\in \bigl(\Omega^{j-r+1}(S_{i+r})\otimes S^k(\g^*)\bigr)^G.
\]

\item For $r=0$,
\begin{align*}
(\p_G)_0(\omega\otimes\phi)
&:= d\omega\otimes\phi
- \sum_\alpha \iota_{\widetilde X_\alpha}\omega \otimes (\theta^\alpha\cdot\phi) \\
&\in \bigl(\Omega^{j+1}(S_i)\otimes S^{k}(\g^*)\bigr)^G
\oplus
\bigl(\Omega^{j-1}(S_i)\otimes S^{k+1}(\g^*)\bigr)^G,
\end{align*}
where $\{X_\alpha\}$ is a basis of $\g$, $\{\theta^\alpha\}$ is the dual basis of $\g^*$, $\widetilde X_\alpha$ denotes the fundamental vector field induced by $X_\alpha$, and $\theta^\alpha\cdot\phi$ denotes symmetric multiplication.
\end{itemize}

\begin{lemma}[\cite{AustinBraam} Theorem~5.1]
$\p_G^2 = 0$.
\end{lemma}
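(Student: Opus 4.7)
The plan is to reduce the equivariant identity $\partial_G^2 = 0$ to the already-established non-equivariant identity $\partial^2 = 0$ on the ordinary Morse-Bott cochain complex (the preceding lemma, cited from \cite{zhuang2025analytictopologicalrealizationsinvariant}). Introduce the equivariant contraction
\[
\iota_G := \sum_\alpha \iota_{\widetilde{X}_\alpha} \otimes \theta^\alpha,
\]
and let $\partial := d + \sum_{r \geq 1}(\partial_G)_r$ denote the non-equivariant differential, understood as acting trivially on the $S^\bullet(\g^*)$ factor. Inspection of the definitions in the excerpt gives $\partial_G = \partial - \iota_G$. Squaring and using $\partial^2 = 0$, the claim reduces to verifying, on the $G$-invariant subspace, the two identities (i) $\iota_G^2 = 0$ and (ii) $\partial \iota_G + \iota_G \partial = 0$.

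Identity (i) holds without any invariance assumption: in $\sum_{\alpha,\beta} \iota_{\widetilde{X}_\alpha}\iota_{\widetilde{X}_\beta} \otimes \theta^\alpha\theta^\beta$ the first factor is antisymmetric and the second symmetric in $(\alpha,\beta)$. For (ii), I split $\partial = d + \sum_{r \geq 1}(\partial_G)_r$ and handle each piece. The anticommutator $d \iota_G + \iota_G d$ equals $\sum_\alpha L_{\widetilde{X}_\alpha}\otimes \theta^\alpha$ by Cartan's magic formula, and this operator annihilates $G$-invariant elements by the standard Cartan-model calculation (the infinitesimal equivariance of an invariant element combined with the identity $\sum_\alpha \theta^\alpha\,\operatorname{ad}^*(X_\alpha) = 0$ on $S^\bullet(\g^*)$). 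For each $r \geq 1$, the endpoint evaluation maps $e_\pm^{i+r,i}$ on $\mathcal{M}(S_{i+r},S_i)$ are $G$-equivariant by construction, so pullback along $e_+^{i+r,i}$ commutes strictly with $\iota_{\widetilde{X}_\alpha}$ and, by the projection formula for equivariant oriented fiber bundles, so does the fiber pushforward along $e_-^{i+r,i}$. Tracking the sign $(-1)^j$ in the definition of $(\partial_G)_r$, which switches to $(-1)^{j-1}$ when $\omega \in \Omega^j$ is replaced by $\iota_{\widetilde{X}_\alpha}\omega \in \Omega^{j-1}$, then gives $(\partial_G)_r \iota_{\widetilde{X}_\alpha} + \iota_{\widetilde{X}_\alpha}(\partial_G)_r = 0$ identically. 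Tensoring with $\theta^\alpha$, summing over $\alpha$, and then summing over $r \geq 1$ completes (ii).

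Main obstacle: all of the genuinely non-formal analytic input is packaged inside the cited identity $\partial^2 = 0$. Its proof proceeds via Stokes' theorem applied to fiber integration over the compactified moduli spaces $\overline{\mathcal{M}}(S_{i+N},S_i)$, whose codimension-one boundary is the union of broken-trajectory strata $\mathcal{M}(S_{i+N},S_{i+N-r})\times_{S_{i+N-r}}\mathcal{M}(S_{i+N-r},S_i)$. The delicate sign and orientation bookkeeping on these fiber products is what makes the boundary terms cancel the iterated compositions $(\partial_G)_r(\partial_G)_{N-r}$. In the present equivariant context this is facilitated by our stability hypothesis: Lemma \ref{lem:orientable} renders the negative normal bundles canonically trivial and hence $G$-invariantly orientable, so the orientations chosen in the previous subsection are compatible with the broken-trajectory boundary identifications, allowing the non-equivariant argument to be imported without introducing local coefficient systems or orientation bundle valued differential forms.
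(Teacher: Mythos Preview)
Your proof is correct and follows essentially the same strategy as Austin--Braam's (which the paper simply cites without reproducing): write $\partial_G = \partial - \iota_G$, invoke the non-equivariant identity $\partial^2=0$, then verify $\iota_G^2=0$ and $\partial\iota_G+\iota_G\partial=0$ on invariants via Cartan's formula and the $G$-equivariance of the evaluation maps.

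One point deserves to be made explicit. The preceding lemma asserts $\partial^2=0$ only on the complex $\bigoplus\Omega^j(S_i)^G$ of $G$-invariant forms, but your reduction applies $\partial^2$ to elements of $\bigl(\Omega^*(S_i)\otimes S^\bullet(\g^*)\bigr)^G$, whose $\Omega^*$-components need not be individually $G$-invariant. This is harmless---the Stokes-theorem argument over the compactified moduli spaces never uses invariance, so $\partial^2=0$ holds on all of $\Omega^*(S_i)$---but it should be stated rather than left implicit. A second, more terminological remark: the commutation of fiber pushforward with $\iota_{\widetilde X_\alpha}$ is not quite the ``projection formula''; it is the identity $(e_-)_*\iota_V=\iota_W(e_-)_*$ for $e_-$-related vector fields, which holds because contraction by the vertical component of $V$ lowers fiber degree and is therefore annihilated by integration over the fiber.
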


\begin{theorem}[\cite{AustinBraam} Theorem~5.3]
The cohomology of $(C_G,\p_G)$ is isomorphic to the Borel equivariant cohomology $H_G^*(M;\R)$ as an $S^\bullet(\g^*)^G$-module.
\end{theorem}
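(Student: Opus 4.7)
The plan is to adapt Austin--Braam's original spectral sequence argument to the present stable $G$-Morse-Bott setting. The key simplification is that by Lemma~\ref{lem:orientable}, the stability hypothesis allows us to work throughout with ordinary $G$-invariant differential forms, so that the Morse-theoretic Cartan complex $(C_G,\p_G)$ can be compared directly with the classical Cartan complex $\bigl(\Omega^*(M)\otimes S^\bullet(\g^*)\bigr)^G$ (with differential induced by $d-\sum_\alpha \iota_{\widetilde X_\alpha}\otimes \theta^\alpha$) without invoking any local coefficient systems.

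I would equip both complexes with the decreasing filtration by degree in $S^\bullet(\g^*)$,
\[
F^r C_G \;=\; \bigoplus_{\substack{i+j+2k=\bullet\\ k\geq r}} \bigl(\Omega^j(S_i)\otimes S^k(\g^*)\bigr)^G,
\]
and analogously on the classical side. The only piece of $\p_G$ that strictly raises the $S$-degree is the Cartan contraction $-\sum_\alpha \iota_{\widetilde X_\alpha}\omega \otimes (\theta^\alpha\cdot\phi)$, while every other component preserves the filtration. Consequently, the $E_0$-differential on $(C_G,\p_G)$ coincides with the non-equivariant Morse-Bott differential $\p$ from the previous subsection, acting on $G$-invariant forms and tensored with the identity on $S^\bullet(\g^*)$. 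Since $G$ is compact, averaging makes the $G$-invariants functor exact, so combining this with the previous theorem gives
\[
E_1 \;\cong\; \bigl(H^*(M;\R)\otimes S^\bullet(\g^*)\bigr)^G,
\]
and the induced $d_1$ is exactly the Cartan operator $-\sum_\alpha \iota_{\widetilde X_\alpha}\otimes \theta^\alpha$ on cohomology classes. The classical Cartan complex filtered the same way has the same $E_1$-page and the same $d_1$, and converges to $H_G^*(M;\R)$ by Cartan's theorem.

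To upgrade the abstract agreement of spectral sequences into an honest isomorphism of $S^\bullet(\g^*)^G$-modules, I would construct a filtration-preserving, $S^\bullet(\g^*)^G$-linear chain map
\[
\Phi \colon \bigl(\Omega^*(M)\otimes S^\bullet(\g^*)\bigr)^G \longrightarrow (C_G,\p_G),
\]
following Austin--Braam, sending $\omega\otimes\phi$ to the element whose $S_i$-component is $(e_-^i)_*(e_+^i)^*\omega \otimes \phi$, obtained by fiber integration on the moduli space of half-infinite negative gradient flow lines ending on $S_i$. The hard part will be verifying that $\Phi$ intertwines the two differentials: the $d\omega$ contributions match via Stokes' theorem applied to the compactified moduli spaces of possibly broken flow lines, whose codimension-one boundary strata produce precisely the pushforward terms $(\p_G)_r$ for $r\geq 1$, while the Cartan contraction terms transfer because the endpoint evaluation maps are $G$-equivariant. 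This Stokes argument relies essentially on orientability of the compactified moduli spaces, which is furnished by Lemma~\ref{lem:orientable} together with the Morse-Bott-Smale property from Theorem~\ref{thm:Morse-Bott-Smale-is-generic}; without the stability hypothesis one would be forced into local coefficients here.

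Once $\Phi$ is shown to induce the identity on $E_1$, the spectral sequence comparison theorem yields the desired isomorphism on cohomology. Convergence of both filtrations is automatic because in any fixed total degree $p$ the relation $i+j+2k=p$ admits only finitely many nonnegative integer solutions, so each filtration is bounded in each degree. The $S^\bullet(\g^*)^G$-module structure is preserved throughout, since the filtrations, differentials, and $\Phi$ are all $S^\bullet(\g^*)^G$-linear by construction.
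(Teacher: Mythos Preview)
The paper does not supply its own proof of this statement; it simply records the result with a citation to \cite{AustinBraam}, Theorem~5.3. Your sketch is a faithful outline of Austin--Braam's original spectral sequence argument, so in that sense you and the paper are aligned.

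One small point worth tightening: to identify the $E_1$-page you need the cohomology of the \emph{non-invariant} Morse--Bott complex $\bigl(\bigoplus_{i,j}\Omega^j(S_i),\,\p\bigr)$ to be $H^*(M;\R)$, and then invoke exactness of $(-)^G$ on the tensor product $\Omega^*(S_\bullet)\otimes S^k(\g^*)$. The ``previous theorem'' in this paper is stated only for the $G$-invariant complex $\bigoplus \Omega^j(S_i)^G$, which is not literally what sits inside $(\Omega^j(S_i)\otimes S^k(\g^*))^G$, since invariants of a tensor product are not the tensor product of invariants when $G$ acts nontrivially on $S^k(\g^*)$. This is not a real obstacle---the non-invariant Morse--Bott statement is the standard Austin--Braam result and is in fact the easier one---but your appeal to ``the previous theorem'' is slightly misdirected and should instead cite the non-invariant version directly.
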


\section{Examples}\label{section:example}

We begin with a simple example in which stabilization is not necessarily required, but nevertheless 
yields the same computational result.

\begin{example}[$S^2$ with $S^1$-rotation]\label{example:sphere}
Let $G=S^1$. We consider the standard $S^1$-action on $S^2$ given by rotation about the $z$-axis.  
Let
\[
f(x,y,z) = z
\]
be the height function. Then $f$ is an $S^1$-Morse-Bott function with two critical submanifolds:
\[
S_N = \{(0,0,1)\}, \qquad S_S = \{(0,0,-1)\},
\]
with indices $2$ and $0$, respectively.  This example appears as \cite[Example~5.4.1 (1)]{AustinBraam}.  
Let $g$ be the standard round metric. Although the function $f$ is not stable at $S_N$, the pair $(f,g)$ already satisfies the Morse-Bott-Smale condition so that the cochain complex $(C_f^{\bullet},\partial_G)$ associated to $f$ can be defined. Let $n$, $s$, and $\theta$ denote generators in 
$\Omega^{0}(S_N)^{G}$, $\Omega^{0}(S_S)^{G}$, and $S(\mathfrak{g}^{*})^{G}$, 
respectively. Then the group $C_f^{p}$ is generated by 
$n\otimes \theta^{k}$ and $s\otimes \theta^{k}$ when $p=2k\ge 0$, and is zero 
otherwise. For degree reasons, the differential $\partial_G$ all vanishes.

On the other hand, we can modify the function $f$ by performing the stabilization at $S_N$. By pushing the north pole slightly downward, one obtains a small circle forming a new critical submanifold $S_N'$ of index $1$, while the original critical submanifold $S_N$ becomes of index $0$; see Figure~\ref{fig:spherewithdimple}. 
After stabilization, this becomes essentially equivalent to \cite[Example~5.4.1 (2)]{AustinBraam} where the $S^1$-Morse-Bott function is taken to be \[f'(x,y,z)=1-z^2.\]
We still use the notations $n,s,\theta$ as above and let $n'_0,n'_1$ be the newly created generators in $\Omega^0(S_N')^G,\Omega^1(S_N')^G$ respectively. Then the cochain group $C_{f'}^p$ associated to the stabilized function $f'$ is generated by $n_0'\otimes \theta^k$ when $p=2k+1\geq 0$; by $n\otimes \theta^k,s\otimes \theta^k, n_1'\otimes \theta^{k-1}$ when $p=2k\geq 0$; and is zero otherwise. The differential  $\partial_{G}$ is given by
\[\partial_{G}(n_0'\otimes \theta^k)=0,\,\partial_{G}(n_1'\otimes \theta^{k-1})=-n_0'\otimes\theta^k,\,\partial_{G}(n\otimes \theta^k)=\partial_{G}(s\otimes\theta^k)=n_0'\otimes\theta^k.\]

Note that the cohomology of both cochain complexes associated to $f$ and its stabilization $f'$ computes the well-known equivariant cohomology groups 
\[
H_{S^1}^n(S^2;\mathbb{R}) =
\begin{cases}
\mathbb{R}, & n=0,\\
\mathbb{R}\oplus\mathbb{R}, & n\in 2\mathbb{Z}_{>0},\\
0, & \text{ otherwise}.
\end{cases}
\]
\end{example}

The next example lies outside the scope of \cite{AustinBraam} due to the non-existence of a Morse-Bott-Smale metric, where the stabilization is really needed for obtaining the cochain complex.

\begin{example}[Mapping torus]\label{example:mapping torus}
    Consider the mapping torus
		\[
		M = T^2 \times [0,1] / {\sim},
		\]
		where $T^2 = (\R/\ZZ)^2$ and the identification is given by
		\[
		(\theta_1, \theta_2, 0) \sim (-\theta_1, \theta_2, 1).
		\]
		Note that $M$ can be alternatively described as the product between $S^1$ and Klein bottle. Let $G = S^1$ act on $M$ via the flow generated by the vector field $\p_\tau$, where $\tau$ denotes the coordinate on the $[0,1]$-factor.
		
		Define an $S^1$-invariant function $f: M \to \R$ by
		\[
		f(\theta_1, \theta_2, \tau) = (3 + \cos 2\pi \theta_1)\sin 2\pi \theta_2 .
		\]
		Then $f$ is $G$-Morse-Bott with four critical orbits:
		\begin{itemize}
			\item one index-$2$ submanifold
			\[
			P_2 = \bigl\{(0, \tfrac14, \tau) \mid \tau \in [0,1]\bigr\},
			\]
			\item two index-$1$ submanifolds
			\[
			Q_1 = \bigl\{(\tfrac12, \tfrac14, \tau) \mid \tau \in [0,1]\bigr\},
			\qquad
			R_1 = \bigl\{(\tfrac12, \tfrac34, \tau) \mid \tau \in [0,1]\bigr\},
			\]
			\item one index-$0$ submanifold
			\[
			S_0 = \bigl\{(0, \tfrac34, \tau) \mid \tau \in [0,1]\bigr\}.
			\]
		\end{itemize}
		
		The function $f$ is not stable, since $P_2$ and $R_1$ fail to be stable critical submanifolds. Moreover, the descending manifolds of $P_2$ and $R_1$ are non-orientable.
		
		In fact, there exists no $G$-invariant metric $g$ for which the pair $(f,g)$ satisfies the Morse-Bott-Smale condition. Indeed, for any $p \in Q_1$, the virtual dimension
		\[
		\op{virdim} \mathcal{M}(p, R_1) = -1,
		\qquad 
		\mathcal{M}(p, R_1) \neq \emptyset .
		\]
		To justify the latter statement, observe that $Q_1$ is stable and the stabilizer satisfies $\stab(p) \cong \ZZ_2$. Let $K$ denote the connected component of $M^{\ZZ_2}$ containing $p$. Then $K$ is diffeomorphic to a Klein bottle and contains $R_1$. Since $Q_1$ is stable, the descending manifold $\des(p)$ is contained in $K$ by Claim \ref{claim:invariant}. Consequently, the moduli space $\mathcal{M}(p, R_1)$ of flow lines from $p$ to $R_1$ in $M$ coincides with the corresponding moduli space in $K$, computed using the restricted function $f|_K$ and metric $g|_K$. The function $f|_K$ has only two critical submanifolds: the index-$1$ submanifold $Q_1$ and the index-$0$ submanifold $R_1$. Therefore, any Morse flow line starting at $p$ must terminate at $R_1$, proving that $\mathcal{M}(p, R_1)$ is nonempty.

        In fact, for any $G$-invariant metric $g$, the pair $(f,g)$ even fails to satisfy the weak Morse-Bott-Smale condition using a similar argument. We leave it to the reader to verify that.
		
		By applying stabilization, we modify $f$ to a stable $G$-Morse-Bott function $f'$. Under this process, the original critical submanifold $P_2$ gives rise to a stable critical submanifold $\bar{P}_1$ of index $1$, together with a new stable critical submanifold $P_2'$ of index $2$ located near $\bar{P}_1$ and intersecting each fiber over $\tau$ in two points. Similarly, the original critical submanifold $R_1$ becomes a stable critical submanifold $\bar{R}_0$ of index $0$, and we introduce a new stable critical submanifold $R_1'$ of index $1$ near $\bar{R}_0$, again intersecting each fiber over $\tau$ in two points. An illustration is provided in Figure~\ref{fig:deformed_shapes2}.

\medskip
\noindent\textbf{Cohomology \(H^*(M;\R)\).}
Fix a $G$-invariant product metric $g$ so that $(f',g)$ satisfies the Morse-Bott-Smale condition. For each
\[
A_i \in \{S_0,\bar R_0,R_1',Q_1,\bar P_1,P_2'\},
\]
there are generators $a_{ij}\in\Omega^j(A_i)^G$, $j=0,1$, where the decoration specifies the component, $i$ denotes the index, and $j$ denotes the degree of the differential form. The cochain complex $(C^\bullet,\p)$ has differential
\[
\begin{aligned}
\p s_{00}      &=  r'_{10}, &\qquad
\p \bar r_{00} &=  -r'_{10}, &\qquad
\p r'_{10}     &= 0, \\
\p s_{01}      &= - r'_{11},&
\p \bar r_{01} &=  r'_{11},&
\p r'_{11}     &= 0, \\
\p q_{10}      &=  p'_{20},&
\p \bar p_{10} &= - p'_{20},&
\p p'_{20}     &= 0, \\
\p q_{11}      &= - p'_{21},&
\p \bar p_{11} &=  p'_{21},&
\p p'_{21}     &= 0.
\end{aligned}
\]
We explain only how to derive the identity $\partial s_{01}=-r_{11}'$; the 
remaining computations are analogous. There are three relevant moduli spaces to 
consider:
\begin{itemize}
  \item The moduli space $\mathcal{M}(R_1',S_0)$ is diffeomorphic to $R_1'$ and 
  admits a double cover of $S_0$ via the endpoint map. Its contribution 
  to the differential yields the term $-r_{11}'$.
  
  \item The moduli space $\mathcal{M}(\bar P_1,S_0)$ is diffeomorphic to 
  $\bar P_1 \bigsqcup \bar P_1 \cong S_0 \bigsqcup S_0$ and likewise admits a double 
  cover of $S_0$ through the endpoint map. However, the two points in each fiber 
  carry opposite orientations, and hence their contributions to the differential 
  cancel.
  
  \item The moduli space $\mathcal{M}(P_2',S_0)$ does not contribute to the 
  differential. Indeed, the $1$-form on $S_0$ representing $s_{01}$ lies in the 
  $d\tau$-direction, while integration along the fiber, whose tangent space is 
  tangent to the torus fiber direction, annihilates it.
\end{itemize}
 The resulting cohomology of $(C^\bullet,\partial)$ computes the ordinary cohomology
\[
H^n(M;\mathbb{R}) =
\begin{cases}
\mathbb{R}, & n=0,\\
\mathbb{R}^2, & n=1,\\
\mathbb{R}, & n=2,\\
0, & \text{otherwise},
\end{cases}
\]
where $H^0$ is generated by $s_{00}+\bar r_{00}$; $H^1$ is generated by 
$s_{01}+\bar r_{01}$ and $q_{10}+\bar p_{10}$; and $H^2$ is generated by $q_{11}+\bar p_{11}$.

\medskip
\noindent\textbf{Equivariant cohomology $H_{S^1}^*(M;\R)$.}
Let $\theta$ denote a generator of $S(\g^*) \cong \R[\theta]$. The equivariant cochain complex $C_G^\bullet$ is generated by
\begin{align*}
& s_{00} \otimes \theta^k,\;
\bar{r}_{00} \otimes \theta^k,\;
r_{10}' \otimes \theta^k,\;
q_{10} \otimes \theta^k,\;
\bar{p}_{10} \otimes \theta^k,\;
p_{20}' \otimes \theta^k, \\
& s_{01} \otimes \theta^k,\;
\bar{r}_{01} \otimes \theta^k,\;
r_{11}' \otimes \theta^k,\;
q_{11} \otimes \theta^k,\;
\bar{p}_{11} \otimes \theta^k,\;
p_{21}' \otimes \theta^k,
\end{align*}
for all $k \ge 0$.

The differential $\p_G$ is given by
\begin{align*}
& \p_G(s_{00} \otimes \theta^k)       =  r_{10}' \otimes \theta^k, &
& \p_G(s_{01} \otimes \theta^k)       = - r_{11}' \otimes \theta^k - s_{00} \otimes \theta^{k+1}, \\[0.5ex]
& \p_G(\bar{r}_{00} \otimes \theta^k) =  -r_{10}' \otimes \theta^k, &
& \p_G(\bar{r}_{01} \otimes \theta^k) =  r_{11}' \otimes \theta^k - \bar{r}_{00} \otimes \theta^{k+1}, \\[0.5ex]
& \p_G(r_{10}' \otimes \theta^k)  = 0, &
& \p_G(r_{11}' \otimes \theta^k)  = - r_{10}' \otimes \theta^{k+1}, \\[0.5ex]
& \p_G(q_{10} \otimes \theta^k)   =  p_{20}' \otimes \theta^k, &
& \p_G(q_{11} \otimes \theta^k)   =  -p_{21}' \otimes \theta^k - q_{10} \otimes \theta^{k+1}, \\[0.5ex]
& \p_G(\bar{p}_{10} \otimes \theta^k)  =  -p_{20}' \otimes \theta^k, &
& \p_G(\bar{p}_{11} \otimes \theta^k)  =  p_{21}' \otimes \theta^k - \bar{p}_{10} \otimes \theta^{k+1}, \\[0.5ex]
& \p_G(p_{20}' \otimes \theta^k)      = 0, &
& \p_G(p_{21}' \otimes \theta^k)      = - p_{20}' \otimes \theta^{k+1}.
\end{align*}

The cohomology of $(C_G^\bullet,\p_G)$ is generated by $s_{00}+\bar{r}_{00}$ in degree $0$ and by $q_{10}+\bar{p}_{10}$ in degree $1$. Consequently, it computes the $S^1$-equivariant cohomology
\[
H_{S^1}^n(M;\mathbb{R}) =
\begin{cases}
\mathbb{R}, & n=0,1,\\
0, & \text{ otherwise}.
\end{cases}
\]


\begin{figure}[htbp]
    \centering
    \begin{subfigure}[b]{0.4\textwidth}
        \centering
        \begin{tikzpicture}
            \node[anchor=south west, inner sep=0] (img)
                {\includegraphics[width=\linewidth]{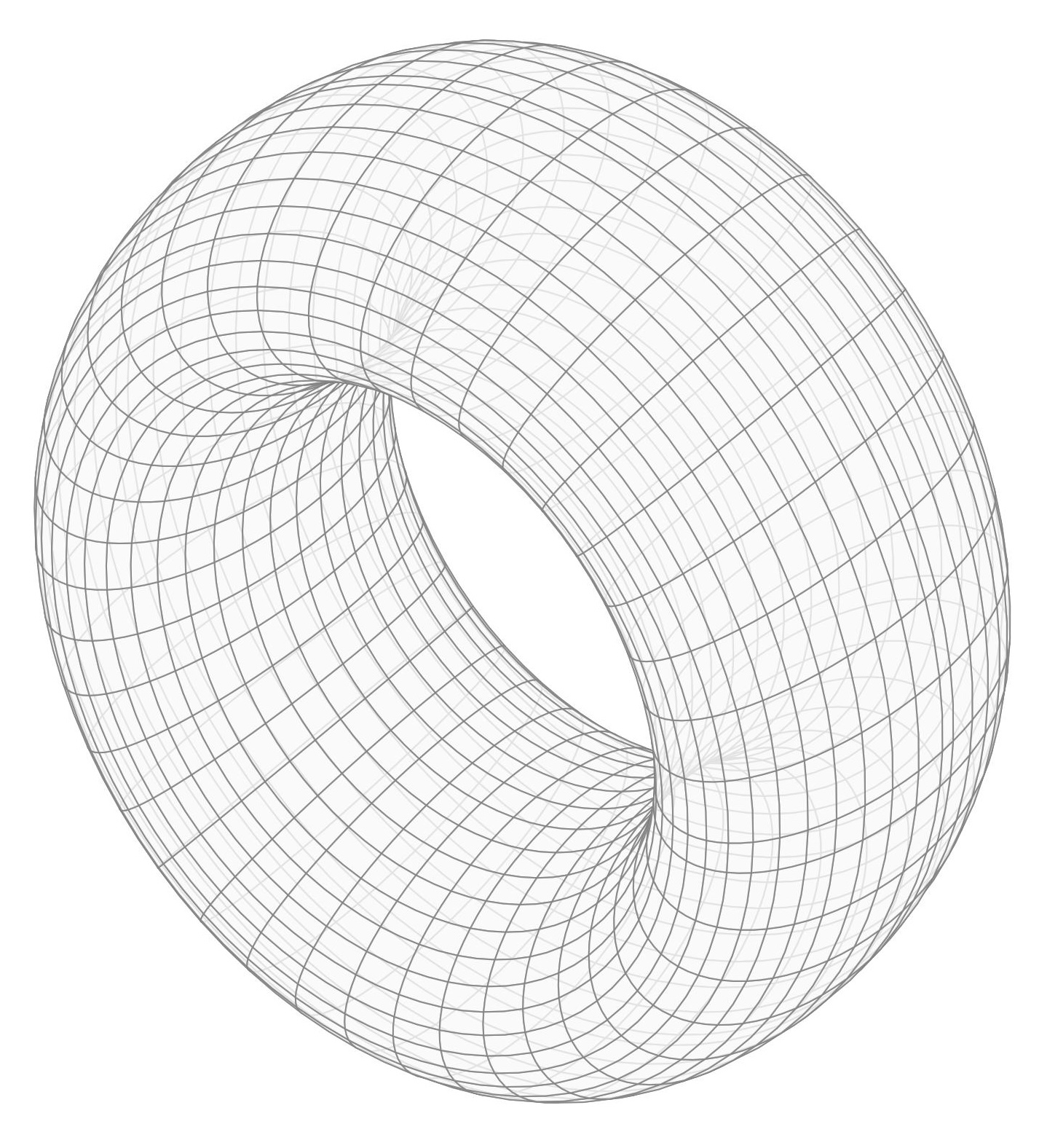}};
            \begin{scope}[x={(img.south east)}, y={(img.north west)}]
                \draw[red, thick, fill] (0.45, 0.92) circle (0.009);
                \node[above] at (0.45, 0.92) {\small $P_2$};

                \draw[red, thick, fill] (0.5, 0.3) circle (0.009);
                \node[above] at (0.5, 0.3) {$R_1$};
            \end{scope}
        \end{tikzpicture}
        \caption{\small The graph of $f$ restricted to a fiber}
        \label{fig:hornedtorus_a}  
    \end{subfigure}
    \hspace{2cm}
    \begin{subfigure}[b]{0.4\textwidth}
        \centering
        \begin{tikzpicture}
            \node[anchor=south west, inner sep=0] (img)
                {\includegraphics[width=\linewidth]{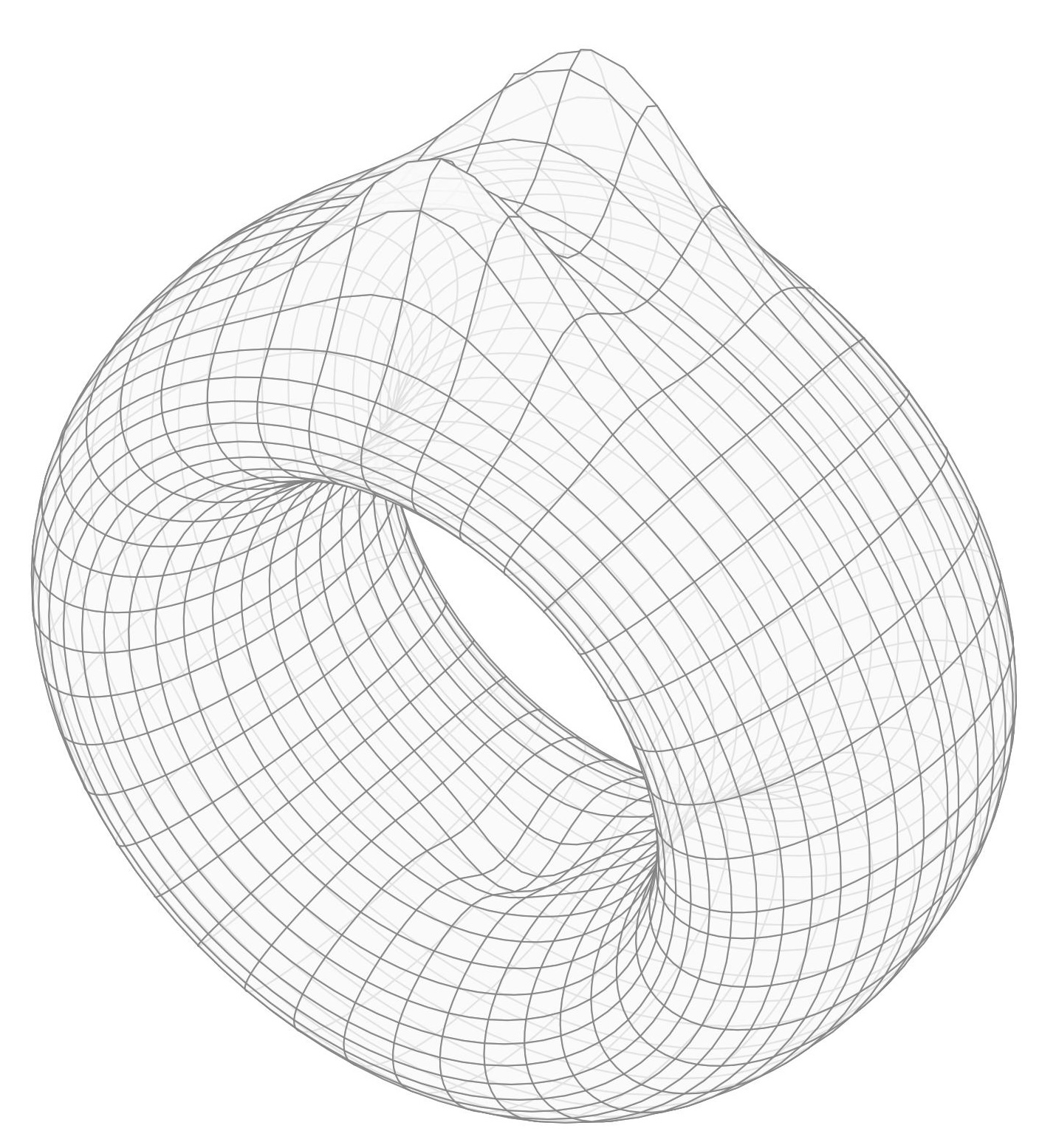}};
            \begin{scope}[x={(img.south east)}, y={(img.north west)}]
                \draw[blue, thick, fill] (0.41, 0.85) rectangle (0.43, 0.87);
                \node[above] at (0.41, 0.85) {\small $P_2'$};
                
                \draw[blue, thick, fill] (0.55, 0.95) rectangle (0.57, 0.97);

                \draw[red, thick, fill] (0.5, 0.83) circle (0.009);
                \node[right] at (0.5, 0.83) {\small $\bar{P}_1$};

                \draw[red, thick, fill] (0.5, 0.23) circle (0.009);
                \node[right] at (0.5, 0.23) {\small $\bar{R}_{0}$};

                \draw[purple, thick, fill] (0.43, 0.21) rectangle (0.45, 0.23);
                \node[below] at (0.43, 0.21) {\small $R_{1}'$};

                \draw[purple, thick, fill] (0.58, 0.32) rectangle (0.6, 0.34);
            \end{scope}
        \end{tikzpicture}
        \caption{\small The graph of $f'$ restricted to a fiber}
        \label{fig:hornedtorus_b}  
    \end{subfigure}
    \caption{Effects of stabilization on a torus fiber}  
    \label{fig:deformed_shapes2}
\end{figure}
\end{example}

\bibliographystyle{alpha}
\bibliography{mybib}

\end{document}